\documentclass[dvipsnames,12pt]{amsart}

\usepackage{amssymb,mathtools,bbm}
\usepackage{color}
\usepackage[normalem]{ulem}
\usepackage{tikz}

\newcommand{\beq}{\begin{equation}}
\newcommand{\eeq}{\end{equation}}
\numberwithin{equation}{section}

\usepackage{enumitem,xcolor}

\newtheorem{thm}{Theorem}[section]
\newtheorem{prop}[thm]{Proposition}
\newtheorem{lemma}[thm]{Lemma}

\newtheorem{thmA}{Theorem}

\theoremstyle{remark}

\theoremstyle{definition}

\newtheorem{defn}[thm]{Definition}
\newtheorem{example}[thm]{Example}

\DeclareMathOperator{\ldens}{\underbar{dens}} % lower density
\DeclareMathOperator{\rhodens}{\rho-\underbar{dens}} % lower rho density
\DeclareMathOperator{\bigO}{{\rm O}}

\newcommand{\A}{\mathcal{A}}

\newcommand{\C}{\mathbb{C}}
\newcommand{\R}{\mathbb{R}}
\newcommand{\N}{\mathbb{N}}
\newcommand{\Z}{\mathbb{Z}}
\newcommand{\calS}{\mathcal{S}}
\newcommand{\T}{\mathcal{T}}
\newcommand{\F}{\mathcal{F}}

\newcommand{\w}{{\bf w}}
\newcommand{\Bw}{B_{\bf w}}
\newcommand{\1}{\mathbbm{1}}  	% indicator function

\newcommand{\abs}[1]{\left| #1 \right|}

\newcommand{\norm}[1]{{\left\|#1\right\|}}

\newcommand{\returnSet}[3]{\mathcal{N}_{#1}(#2, \, #3)}  % return set general

\newcommand{\nT}[1]{\returnSet{#1}{x}{U}}  % return set shortcut
\newcommand{\nTU}{\returnSet{T}{x}{U}}  % return set shortcut

\usepackage{pifont}

\title[$\rho$-frequently hypercyclic operators]{{\large{$\rho$}}-frequently hypercyclic operators} 
\author[T. Carroll]{Tom Carroll}                                
\address{School of Mathematical Sciences, University College Cork, Ireland}                                                                    
\email{t.carroll@ucc.ie}                                                                          
\author[C. Gilmore]{Clifford Gilmore}                                
\address{Laboratoire de Mathématiques Blaise Pascal UMR 6620, Université Clermont Auvergne, Campus universitaire des Cézeaux, 3 place Vasarely, 63178 Aubière Cedex, France}                                                              
\email{clifford.gilmore@uca.fr}                                                                                                   
\date{}                                       
                                                   
\keywords{Hypercyclicity, frequent hypercyclicity, $\rho$-frequent hypercyclicity, $\rho$-frequent universality, linear dynamics}                                   
\subjclass[2020]{47A16}                               

\thanks{C.~Gilmore was supported by the European Union’s Horizon Europe research and innovation programme under the Marie Skłodowska-Curie grant agreement No.~101066064. This work was carried out in part during a visit by T.~Carroll to UCA. He is grateful to the members of LMBP for  their  hospitality and stimulating mathematical discussions.}

\usepackage{calc}

\usepackage[a4paper]{geometry}

\newgeometry{twoside=false,,textheight=\textheight+8ex,left=10em,right=8.6em}

\begin{document}

\begin{abstract}
The concept of $\rho$-frequent hypercyclicity is introduced in order to provide a refined form of  frequent hypercyclicity. This is achieved by replacing the denominator in the definition of frequent hypercyclicity by an appropriately chosen calibration function $\rho$.
A $\rho$-Frequent Hypercyclicity Criterion is determined and the $\rho$-frequent hypercyclicity of weighted backward shifts is investigated.
\end{abstract}

\maketitle

%
%
% Introduction
%
%
\section{Introduction}\label{sec:intro}

 A foundational concept in linear dynamics is \textsl{hypercyclicity\/}. 
A continuous linear operator $T$ acting on an infinite dimensional  separable Fr\'echet space is 
\textsl{hypercyclic\/}  if the orbit of some element is dense in the space. 
This natural concept, that can be thought of as \lq linear chaos\rq, 
is equivalent to \textsl{topological transitivity\/}, 
meaning that the orbit of any nonempty open set under $T$ intersects any 
other open set in the space - this is Birkhoff's Transitivity Theorem
(cf.\ \cite[Theorem~1.2]{BM09}). 
An element in the space whose orbit under $T$ is dense is 
called a \textsl{hypercyclic vector\/} for $T$. 
The books by Bayart and Matheron~\cite{BM09} and by Grosse-Erdmann 
and Peris~\cite{LinearChaos2011} remain excellent 
references for the subject, while the review article \cite{CliffReview}  
points also to more recent developments. 

It is natural to ask how often the orbit of a hypercyclic vector may visit 
an open set.  
In this regard, the most that one can expect is that the orbit spends 
a positive fraction of time in each open subset of the space.
This concept, known as \textsl{frequent hypercyclicity\/}, was introduced by 
Bayart  and Grivaux~\cite{BG06}. 
To be precise, for an operator $T$ acting on a  Fr\'echet space $X$, for $x \in X$
and for a nonempty open subset $U$ of $X$, the \textsl{return set\/} is defined to be 
\[
\nTU \coloneqq \big\{ n \in \N \,\colon\, T^nx \in U \big\}.
\]
The return set consists of those times $n$ when the orbit of $x$ under $T$ lies in $U$.  
The  \textsl{lower density\/} of the subset $\nTU$  in the set of natural 
numbers $\N$ is 
\beq\label{density}
\ldens(\nTU) \coloneqq \liminf_{N\to\infty} \frac{\#\{n \in \nTU \,\colon\,n \leq N\}}{N}.
\eeq 
When $X$ is separable and infinite dimensional, the operator $T$ is said to be \textsl{frequently hypercyclic\/} if there is some $x$ 
in $X$ for which $\ldens(\nTU)$ is positive for every nonempty open subset $U$ of $X$. 
Many important and classical operators are known to be frequently hypercyclic. 
These include the differentiation and  translation operators acting on the space 
$\mathcal{H}(\C)$ of entire functions in the complex plane.

We introduce here a refined form of frequent hypercyclicity by replacing 
the denominator in \eqref{density} by $\rho(N)$, where $\rho(t)$ is a 
function that grows slower than $t$. 
Though the concept overlaps with ideas already well-developed 
in the literature, as will be explained in detail later, 
the formulation presented here may have some advantages. 

The properties required of the \lq calibration function\rq\ $\rho$ and 
some straightforward preliminary results for sets with positive lower $\rho$-density 
are set out in Section~\ref{sec:generalsets}. 
For example, the lower $\rho$-density of a set $\A \subseteq \N$ is interpreted in 
terms of the growth of the sequence of natural numbers that makes up $\A$. 
The concept of  $\rho$-frequent hypercyclicity is formally defined
in Section~\ref{sec:rhofhc}, and a version of the Frequent Hypercyclicity Criterion 
adapted to  $\rho$-frequent hypercyclicity is derived. 
In Section~\ref{sec:shifts}, the $\rho$-Frequent Hypercyclicity Criterion leads to 
a sufficient condition for a weighted backward shift on the sequence space 
$\ell^p(\N)$ to be  $\rho$-frequently hypercyclic. 
The necessity of this condition is established for a restricted class of weighted 
backward shifts. 
This, in turn, permits the construction in Section~\ref{sec:varying_rho}
of examples to show that different calibration functions give rise to distinct 
notions of $\rho$-frequent hypercyclicity.
In the final Section~\ref{sec:literature}, we survey several definitions and associated 
results from the literature that overlap with the idea of 
$\rho$-frequent hypercyclicity. 
%
%
% End of Introduction
%
%

\section{Densities relative to a calibration function $\rho$.}
\label{sec:generalsets}

\subsection{Calibration Functions and Density}
\label{subsection:2.1}

We first list the properties required of a calibration function $\rho$. 
\begin{defn}\label{defn:densityfunction}
A function $\rho \colon \R^+ \to \R^+$ is called a \textsl{calibration function} 
if it has the following properties:
\begin{enumerate}
\item $\rho$ is continuous, strictly increasing and unbounded;%
\label{rho_prop1}
\item $\rho$ is  ${\rm Lip}(1,1)$ so that 
$\vert \rho(t_1) - \rho(t_2) \vert \leq \vert t_1-t_2\vert$,  $t_1$, $t_2 \geq 0$;%
\label{rho_prop2}
\item $\rho$ is concave so that $\rho(\lambda t_1+(1-\lambda)t_2) 
\geq \lambda \rho(t_1) +(1-\lambda)\rho(t_2)$, $0 \leq \lambda \leq 1$ and 
$t_1$, $t_2 \geq 0$;\label{rho_prop3}
\item $\rho(t)/t$ is a non-increasing function of $t$. \label{rho_prop4}
\end{enumerate}
\end{defn}
We note,  without further comment, that it is sufficient that these 
properties hold for all sufficiently large $t$. 
With this in mind, we may assume that $\rho(0) = 0$. 
While the primary example of a  calibration  function is simply $\rho(t) = t$, 
each of $\rho(t) = t^\alpha$ ($0< \alpha <1$), $\rho(t) = t/\log t$, $\rho(t) = \log t$, 
$\rho(t) =\log\log t$ is a calibration function.

Next we define the lower density associated with a  calibration function $\rho$. 
\begin{defn}The \textsl{lower $\rho$-density\/} of a subset $\A$ of the 
natural numbers $\N$ is defined to be 
\beq\label{rhodensity}
\rhodens(\A) \coloneqq \liminf_{N\to\infty} \frac{\#\{n \in \A \,\colon\,n \leq N\}}{\rho(N)}.
\eeq 
\end{defn}
In the special and most important case when $\rho(t) = t$, 
this is  simply  the \textsl{lower density\/} % and \textsl{upper density\/} 
of $\A$ and abbreviated to 
$\ldens(\A)$. 
The upper $\rho$-density may be defined similarly, though it will not 
be used here. It suffices to note that should the upper and lower $\rho$-densities 
of $\A$ agree then we refer to their common value as the $\rho$-density of $\A$. 
Note also that the lower $\rho$-density of a set $\A$ could well be infinite.

Since an infinite set $\A$ of natural numbers can equivalently be viewed as a 
strictly increasing sequence $(n_k)_{k=1}^\infty$, the density of $\A$ can 
be interpreted in terms of the growth of the sequence. 
Intuitively, $\A$ will have positive lower $\rho$-density if the sequence $(n_k)$ does
not grow too rapidly. 
This is made precise by the following characterisation of 
positive lower $\rho$-density; 
it is a simple extension to general $\rho$ of the well-known 
condition \lq $n_k = \bigO(k)$ as $k \to \infty$\rq\  
that is equivalent to positive lower density,  the case $\rho(t) = t$. 

\begin{lemma}\label{lem:nkdensity}
Let $\A= \{n_k\,\colon\, k \in \N\}$ where $(n_k)_{k=1}^\infty$ 
is a strictly increasing sequence of natural numbers. 
Then $\A$ has positive lower $\rho$-density if and only if 
\beq\label{nkdensity}
\liminf_{k\to\infty} \frac{k}{\rho(n_k)} > 0,
\eeq
equivalently if and only if 
\beq\label{nkdensity2}
\rho(n_k) = \bigO(k) \text{ as } k\to\infty.
\eeq
\end{lemma}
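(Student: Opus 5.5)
Write $A(N) = \#\{n\in\A : n\le N\}$. The plan is to compare $A(N)/\rho(N)$ with $k/\rho(n_k)$ along the blocks $n_k \le N < n_{k+1}$, on which $A(N) = k$. Since $\rho$ is strictly increasing (property~\eqref{rho_prop1}), for such $N$ we have
\[
\frac{k}{\rho(n_{k+1})} < \frac{A(N)}{\rho(N)} \le \frac{k}{\rho(n_k)}.
\]
The key point is that the infimum of $A(N)/\rho(N)$ over a block is essentially attained as $N \to n_{k+1}^-$. The lower density should therefore be governed by $k/\rho(n_{k+1})$, and we then pass from $k/\rho(n_{k+1})$ back to $(k+1)/\rho(n_{k+1})$.

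For the forward direction, suppose $\rhodens(\A) = c > 0$. Take $N = n_k$, which gives $A(n_k)/\rho(n_k) = k/\rho(n_k)$. Hence $\liminf_k k/\rho(n_k) \ge c > 0$, which is \eqref{nkdensity}.

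For the converse, suppose $k/\rho(n_k) \ge \delta > 0$ for all $k \ge k_0$. For $n_k \le N < n_{k+1}$ the left inequality above gives
\[
\frac{A(N)}{\rho(N)} > \frac{k}{\rho(n_{k+1})} = \frac{k}{k+1}\cdot\frac{k+1}{\rho(n_{k+1})} \ge \frac{k}{k+1}\,\delta \ge \frac{\delta}{2}.
\]
Since $\rho$ is unbounded and $\A$ is infinite, every large $N$ lies in such a block with $k\ge k_0$, so $\rhodens(\A)\ge \delta/2>0$. If $\A$ were finite, the hypothesis would not even make sense, so there is no issue there. Only the monotonicity of $\rho$ is needed; the Lipschitz and concavity properties play no role.

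Finally, \eqref{nkdensity} and \eqref{nkdensity2} are equivalent because $\liminf_k k/\rho(n_k) > 0$ says exactly that $\rho(n_k) \le C k$ for all large $k$. Since $\rho(n_k) > 0$, this is the same as $\rho(n_k) = \bigO(k)$.

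The only step needing any care is the shift from index $k$ to $k+1$ in the converse, and the factor $k/(k+1) \to 1$ handles it. I do not expect a real obstacle.
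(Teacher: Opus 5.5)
Your proof is correct and follows essentially the same route as the paper. One direction tests $N=n_k$; the other bounds $A(N)/\rho(N)$ from below on each block between consecutive terms by the block count divided by $\rho$ at the right endpoint, then absorbs the index shift. The only cosmetic difference is in that last step: the paper absorbs the shift additively, via $(k-1)/\rho(n_k)=k/\rho(n_k)-1/\rho(n_k)$ and $\rho\to\infty$, whereas your factor $k/(k+1)$ does not need $\rho$ to be unbounded (and in placing a large $N$ in a block, $n_k\to\infty$ alone suffices).
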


\begin{proof}With $\A = \{n_k\,\colon\, k \in \N\}$, let us suppose that 
\eqref{nkdensity} holds. 
For $n_{k-1} \leq  N < n_k$, we have 
$\#\{n \in \A \,\colon\,n \leq N\} = k-1$. 
Since  $\rho$ is an increasing function, 
\[
\frac{\#\{n \in \A \,\colon\,n \leq N\}}{\rho(N)} = \frac{k-1}{\rho(N)} \geq \frac{k-1}{\rho(n_k)},
\]
for $N$ in this range. It follows that 
\[
\rhodens(\A) 	\geq  \liminf_{k\to\infty} \frac{k-1}{\rho(n_k)} 
			= \liminf_{k\to\infty} \frac{k}{\rho(n_k)} >0,
\]
where the assumption that  $\rho(t) \to \infty$ as $t\to \infty$ was used. 

In the other direction, suppose that $\rhodens(\A) >0$. 
For $N = n_k$, we have 
\[
\frac{\#\{n \in \A \,\colon\,n \leq n_k\}}{\rho(n_k)} = \frac{k}{\rho(n_k)},
\]
from which \eqref{nkdensity} follows. 

It is clear that \eqref{nkdensity} and \eqref{nkdensity2} are equivalent. 
\end{proof}

\subsection{The sequence of natural numbers $\mathbf{(u_k)}$.}
\label{subsection:2.2}
Associated with the  calibration function $\rho$ 
is the sequence of natural numbers $(u_k)_{k=k_0}^\infty$ where 
\beq\label{uk}
u_k = \lfloor \rho^{-1}(k) \rfloor, \quad k\geq k_0, 
\eeq
for a suitably large $k_0$. On occasion, we write $u(k)$ for $u_k$. 
Now, by the ${\rm Lip}(1,1)$ condition on $\rho$, we have that 
$\rho^{-1}(k+1) - \rho^{-1}(k) \geq 1$. 
Since, in general, $\lfloor x \rfloor > x-1$, we deduce that 
\[
u_k \leq \rho^{-1}(k) \leq \rho^{-1}(k+1) -1 
	< \lfloor \rho^{-1}(k+1) \rfloor = u_{k+1} \leq  \rho^{-1}(k+1).
\]
In particular, the sequence $(u_k)$ is increasing. 

In what is essentially a tautology, the sequence $(u_k)$ can be thought of as 
the canonical sequence with positive $\rho$-density. 
To see this, set $\A$ to be the set of natural numbers in the sequence $(u_k)$.
For $u_k \leq N< u_{k+1}$, 
\[
\frac{\#\{n \in \A \,\colon\,n \leq N\}}{\rho(N)} = \frac{k}{\rho(N)} 
	\leq \frac{k}{\rho(u_k)} \leq \frac{k}{\rho(\rho^{-1}(k-1))} = \frac{k}{k-1}
\]
and 
\[
\frac{\#\{n \in \A \,\colon\,n \leq N\}}{\rho(N)} = \frac{k}{\rho(N)} 
	\geq \frac{k}{\rho(u_{k+1})} \geq \frac{k}{\rho(\rho^{-1}(k+1))} = \frac{k}{k+1}.
\]
It follows that the sequence $(u_k)$ has $\rho$-density equal to 1. 

The sequence $(u_k)$ plays a key role in the formulation of a 
$\rho$-Frequent Hypercyclicity Criterion in the next section. 
The following convexity-type property of the sequence $(u_k)$ 
will, in turn, prove to be important in Section~\ref{sec:shifts}.
\begin{lemma}\label{lemma:uk}
The sequence $(u_k)$ is strictly increasing and satisfies
\beq\label{uk_convex}
u_k+ u_j \leq u_{k+j}, \quad k,\, j \geq k_0. 
\eeq
\end{lemma}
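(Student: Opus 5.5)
Strict monotonicity has already been established in the discussion preceding the lemma. There the ${\rm Lip}(1,1)$ property gave $\rho^{-1}(k+1)-\rho^{-1}(k)\geq 1$, and hence $u_k<u_{k+1}$. So the content of the lemma is the superadditivity \eqref{uk_convex}. The plan is to prove first a superadditivity property of $\rho^{-1}$ itself, and then pass to integer parts.

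\emph{Step 1: superadditivity of $\rho^{-1}$.} Since $\rho$ is continuous, strictly increasing and concave with $\rho(0)=0$, its inverse $\rho^{-1}$ is continuous, increasing and convex, with $\rho^{-1}(0)=0$. Property~\eqref{rho_prop4} of Definition~\ref{defn:densityfunction} says that $\rho(t)/t$ is non-increasing. Putting $s=\rho(t)$, this is equivalent to $\rho^{-1}(s)/s$ being non-decreasing in $s$; concavity together with $\rho(0)=0$ would give the same conclusion. Now let $a,b>0$. Then
\[
\rho^{-1}(a)+\rho^{-1}(b) = a\,\frac{\rho^{-1}(a)}{a} + b\,\frac{\rho^{-1}(b)}{b}
\leq (a+b)\,\frac{\rho^{-1}(a+b)}{a+b} = \rho^{-1}(a+b).
\]
Hence $\rho^{-1}$ is superadditive on the range where the properties hold.

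\emph{Step 2: passing to floors.} For $k,j\geq k_0$ we have
\[
u_k+u_j=\lfloor \rho^{-1}(k)\rfloor+\lfloor\rho^{-1}(j)\rfloor .
\]
This is an integer not exceeding $\rho^{-1}(k)+\rho^{-1}(j)$, and by Step~1 that sum is at most $\rho^{-1}(k+j)$. An integer bounded above by a real number $x$ is bounded above by $\lfloor x\rfloor$. Therefore $u_k+u_j\leq \lfloor\rho^{-1}(k+j)\rfloor=u_{k+j}$.

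\emph{Main obstacle.} The only delicate point is the paper's remark that the properties of $\rho$ need only hold for large $t$, with the normalisation $\rho(0)=0$ assumed. If $\rho$ is modified near $0$, one must check that the monotonicity of $\rho(t)/t$ used in Step~1 holds on the whole relevant range, namely $t\geq \rho^{-1}(k_0)$. Since $k,j,k+j$ all lie in $[k_0,\infty)$, only this range is used. So it suffices to choose $k_0$ large enough that property~\eqref{rho_prop4} holds for $t\geq\rho^{-1}(k_0)$, and this is how I would handle it.
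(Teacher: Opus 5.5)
Your proof is correct. Its outer structure matches the paper's: first establish $\rho^{-1}(k)+\rho^{-1}(j)\le\rho^{-1}(k+j)$, then pass to integer parts. The difference lies in how you prove that key inequality.

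The paper assumes $k\le j$ and applies concavity with $\lambda=k/j$, $t_1=\rho^{-1}(k)$, $t_2=\rho^{-1}(k+j)$. This gives $(k/j)\rho^{-1}(k)+(1-k/j)\rho^{-1}(k+j)\ge\rho^{-1}(j)$, which leaves a remainder term $\frac{1}{j}\big[(k+j)\rho^{-1}(k)-k\rho^{-1}(k+j)\big]$. The paper then shows this remainder is nonpositive using property (iv), i.e.\ that $\rho^{-1}(s)/s$ is non-decreasing.

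Your weighted-average identity reaches the same inequality in one line from property (iv) alone, with no need to order $k$ and $j$. It shows that concavity is not actually needed for this lemma. In the paper's argument, the concavity step is followed by exactly the monotonicity that already suffices on its own.

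This economy also makes your handling of the convention that the properties of $\rho$ need only hold for large $t$ cleaner than it would be for the paper's route. Only property (iv) on $[\rho^{-1}(k_0),\infty)$ has to be checked, whereas the paper's argument would need concavity on that range as well. The paper leaves this point implicit.
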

\begin{proof}We have just seen that $u_k < u_{k+1}$ for each $k$. It remains
to establish \eqref{uk_convex}. 
In general, $\lfloor x \rfloor + \lfloor y \rfloor \leq \lfloor x+y\rfloor$.
The inequality \eqref{uk_convex} follows if 
\beq\label{rho_property}
\rho^{-1}(k) +\rho^{-1}(j) \leq \rho^{-1}(k+j),
\eeq
since then
\begin{align*}
u_k + u_j & = \lfloor \rho^{-1}(k)\rfloor + \lfloor \rho^{-1}(j)\rfloor\\
& \leq \lfloor \rho^{-1}(k) + \rho^{-1}(j)\rfloor\\
& \leq \lfloor \rho^{-1}(k+j) \rfloor = u_{k+j}.
\end{align*}
The inequality \eqref{rho_property} is a consequence of 
Properties~\eqref{rho_prop3} and \eqref{rho_prop4} in 
Definition~\ref{defn:densityfunction} of a  calibration function. 
For $k\leq j$, and with $\lambda = k/j$, $t_1 = \rho^{-1}(k)$, $t_2 = \rho^{-1}(k+j)$,  
the concavity Property~\eqref{rho_prop3} leads to 
\[
\rho\big[ (k/j)\rho^{-1}(k) + (1-k/j) \rho^{-1}(k+j) \big] \geq  j.
\]
It follows, since $\rho$ is an increasing function, that
\[
(k/j)\rho^{-1}(k) + (1-k/j) \rho^{-1}(k+j) \geq \rho^{-1}(j).
\]
Thus, 
\[
\rho^{-1}(k) + \rho^{-1}(j)  \leq \rho^{-1}(k+j) 
	+ \frac{1}{j}\big[ (k+j)\rho^{-1}(k) - k \rho^{-1}(k+j) \big].
\]
By Property~\eqref{rho_prop4} of $\rho$, and since $\rho$ is increasing, 
the function $\rho^{-1}(s)/s$ is a non-decreasing function of $s$. 
Hence,  $(k+j)\rho^{-1}(k) \leq k \rho^{-1}(k+j)$, and \eqref{rho_property} follows. 
\end{proof}
%
%
% End of Section on $\rho$-density in general 
%
%

\section{$\rho$-frequent hypercyclicity}\label{sec:rhofhc}

We now introduce the main subject of this work which is the concept of 
$\rho$-frequent hypercyclicity.
\begin{defn}\label{defn:rhofhc}
An operator $T$ acting on a separable infinite dimensional Fr\'echet space $X$ is said to be 
\textsl{$\rho$-frequently hypercyclic\/}  if there is an 
element $x$ of $X$ such that, for each nonempty open set $U$ in $X$,
\beq\label{rhofhc}
\rhodens \big( \nTU \big) >0.
\eeq
We call such an $x$ a \textsl{$\rho$-frequently hypercyclic vector\/} for $T$. 
\end{defn}
\noindent The case $\rho(t) = t$ equates to \textsl{frequent hypercyclicity\/} 
as introduced in \cite{BG06}. 
Indeed, $\rho$-frequent hypercyclicity is intended as a spectrum of ever 
stronger notions of hypercyclicity that interpolates between hypercyclicity 
at one end and frequent hypercyclicity at the other.  
In Section~\ref{sec:literature}, we will set out in detail how our notion of 
$\rho$-frequent hypercyclicity overlaps with similar concepts in the 
linear dynamics literature. 

The Hypercyclicity Criterion (cf.\ \cite[Theorem~1.6]{BM09}) is a sufficient condition for an operator to be hypercyclic. 
It has been adapted to cover stronger forms of 
hypercyclicity including frequent hypercyclicity (cf.\ \cite[Theorem~6.18]{BM09}). 
Here we present, in turn, a $\rho$-Frequent Hypercyclicity Criterion. 
We first introduce a more general $\rho$-Frequent Universality Criterion 
for sequences of mappings and deduce the $\rho$-Frequent Hypercyclicity 
Criterion as a special case.

The topological vector space $X$ is said to be 	an \emph{$F$-space} 
if its topology is induced by a complete translation-invariant metric. 
We may assume that this metric is defined by an \emph{$F$-norm}, 
that is a functional $\norm{\, \cdot \,} \colon X \to \R_+$ such that 
for all $x, y \in X$ and scalars $\lambda$,
\begin{enumerate}[label=(\roman*), itemsep=1ex]
	\item $\norm{x +y} \leq \norm{x} + \norm{y}$,
	
	\item $\norm{\lambda x} \leq \norm{x}$, if $\abs{\lambda} \leq 1$,
	
	\item $\displaystyle \lim_{\lambda \to 0} \norm{\lambda x} = 0$,
	
	\item $\norm{x} = 0$ implies that $x = 0$.
\end{enumerate}

We generalise the concept of $\rho$-frequent hypercyclicity to the 
corresponding notion of $\rho$-universality in the following definition 
by considering an arbitrary sequence  of mappings $(T_n)$.

\begin{defn}
Let $X$ and $Y$ be topological spaces and $T_n \colon X \to Y$, 
$n \in \N$, be mappings. 
We say an element $x \in X$ is \emph{$\rho$-frequently universal} 
for the sequence $\T = \left( T_n \right)$ if for every nonempty open 
$U \subset Y$, the return set 
\begin{equation*}
\nT{\T} \coloneqq \{ n \in \N : T_n (x) \in U \}
\end{equation*}
has positive lower $\rho$-density. 
The sequence $(T_n)$ is called \emph{$\rho$-frequently universal} 
if it possesses a $\rho$-frequently universal element.
\end{defn}

Let $X$ be an $F$-space equipped with an $F$-norm $\norm{\, \cdot \,}$. 
A series $\sum_{j=1}^{\infty} x_j$ in $X$ is said to be 
\emph{unconditionally convergent} 
if, for every $\varepsilon >0$, there exists some $N \geq 1$ such that 
for every finite set $F \subset \N$ with 
$F \cap \{1, 2, \dotsc, N\} = \varnothing$ we have
\begin{equation*}
\bigg\Vert \sum_{j \in F} x_j \bigg\Vert < \varepsilon.
\end{equation*}
For an index set $\Lambda$, a collection of series 
$\sum_{j=1}^{\infty} x_{\lambda, j}$, $\lambda \in \Lambda$, in $X$ 
is said to be \emph{unconditionally convergent, uniformly in 
$\lambda \in \Lambda$} if, for every $\varepsilon >0$, 
there exists some $N \geq 1$ such that for every finite set $F \subset \N$ 
with $F \cap \{1, 2, \dotsc, N\} = \varnothing$ and every 
$\lambda \in \Lambda$ we have
\begin{equation*}
\bigg\Vert\sum_{j \in F} x_{\lambda,j} \bigg\Vert< \varepsilon.
\end{equation*}

We will utilise the following Frequent Universality Criterion, 
which is due to Bonilla and Grosse-Erdmann~\cite{BGE07, BGE07_Erratum}.  

\begin{thmA}[Frequent Universality Criterion] \label{thm:FUC}
Let $X$ be an $F$-space, $Y$ a separable $F$-space 
and $T_n \colon X \to Y$, $n \in \N$, continuous linear operators. 
Suppose that there exists a dense subset $Y_0 \subset Y$
and mappings $S_n \colon Y_0 \to X$, $n \in \N$,
such that, for each $y \in Y_0$, the following hold:
\begin{enumerate}[label=\emph{(\Roman*)}, itemsep=1.5ex]
\item $\sum_{j=1}^k T_kS_{k-j}(y)$
	converges unconditionally, uniformly in $k \in \N$;
	\label{item:FUCI}
\item $\sum_{j=1}^\infty T_kS_{k+j}(y)$ converges unconditionally, uniformly in $k \in \N$;
	\label{item:FUCII}
\item $\sum_{j=1}^\infty S_j(y)$ converges unconditionally;
	\label{item:FUCIII}
\item $T_n S_n (y) \to y$ as $n \to \infty$.
	\label{item:FUCIV}
\end{enumerate}
Then the sequence $(T_n)$ is frequently universal.
\end{thmA}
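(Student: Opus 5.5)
The plan is the classical construction of a frequently universal vector as a sum of $S_n(y)$ over well-separated sets of positive lower density. Since $Y$ is a separable $F$-space and $Y_0$ is dense, I first fix a countable set $\{z_1, z_2, \dots\}\subset Y_0$ that is dense in $Y$. I then enumerate it as a sequence $(y_l)_{l\ge1}$ in which every $z_i$ appears infinitely often. For each $l$ I put $\varepsilon_l = 2^{-l}$.

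Using (I), (II) and (III) applied to $y_l$, I choose $N_l$ increasing so that three things hold for every finite $F\subset\N$ with $F\cap\{1,\dots,N_l\}=\varnothing$:
\[
\Big\|\sum_{j\in F}S_j(y_l)\Big\|<\varepsilon_l,\qquad \Big\|\sum_{j\in F, j<k}T_kS_{k-j}(y_l)\Big\|<\varepsilon_l,\qquad \Big\|\sum_{j\in F}T_kS_{k+j}(y_l)\Big\|<\varepsilon_l ,
\]
where the last two hold uniformly in $k$. By (IV), after enlarging $N_l$, I also require $\|T_nS_n(y_l)-y_l\|<\varepsilon_l$ for all $n\ge N_l$.

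The combinatorial input, which I expect to be the main obstacle, is the following lemma. For any sequence $(N_l)$ there are pairwise disjoint sets $A_l\subset\N$ with three properties:
\begin{itemize}
\item each $A_l$ has positive lower density;
\item $\min A_l\ge N_l$;
\item $|n-m|\ge N_l+N_k$ whenever $n\in A_l$, $m\in A_k$ and $n\ne m$.
\end{itemize}
I would first try the standard construction (as in Bayart--Grivaux and Bonilla--Grosse-Erdmann). Partition $\N$ into the sets $\{2^{l-1}(2i-1)\}$, which have densities $2^{-l}$. Then dilate and shift these blocks, replacing $n$ by $c_l n + d_l$-type arithmetic progressions built on a rapidly increasing sequence. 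Along the way I discard the finitely many elements per scale that violate the separation. The point to check is that the separation requirement $N_l+N_k$ grows with the indices, while each $A_l$ only loses a proportion of elements small enough to keep a positive lower density.

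With the sets in hand, I define
\[
x=\sum_{l\ge1}\sum_{n\in A_l}S_n(y_l).
\]
Convergence in $X$ follows from (III): the $A_l$ are disjoint and lie beyond $N_l$, so the $l$-th block has $F$-norm less than $\varepsilon_l$, and the tails are summable.

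Now fix $l$ and $n\in A_l$. By continuity of $T_n$ I can write
\[
T_nx-y_l=(T_nS_n(y_l)-y_l)+\sum_{k}\sum_{m\in A_k,\,m<n}T_nS_m(y_k)+\sum_{k}\sum_{m\in A_k,\,m>n}T_nS_m(y_k).
\]
I estimate each $k$-block separately:
\begin{itemize}
\item In the terms with $m<n$, I write $m=n-j$ with $j\ge N_k+N_l>N_k$, so condition (I) for $y_k$ bounds the $k$-block by $\varepsilon_k$.
\item In the terms with $m>n$, I write $m=n+j$ with $j>N_k$, and condition (II) again gives at most $\varepsilon_k$.
\end{itemize}
The case $k=l$ is included, using the separation inside $A_l$. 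By the triangle inequality for the $F$-norm, the error is at most $\varepsilon_l+2\sum_k\varepsilon_k$. This is not yet small, so I refine the choice to make the separation $N_l+N_k$ control terms with a factor tied to $l$. Concretely, I require the tails for $y_k$ to be below $\varepsilon_{\max(k,l)}$ for separations of size $\ge N_{\max(k,l)}$, which the separation $N_l+N_k\ge N_{\max(k,l)}$ permits. The error is then at most $\varepsilon_l+2\sum_k\varepsilon_{\max(k,l)}\lesssim l\,2^{-l}\to0$.

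Finally, given a nonempty open $U\subset Y$, I pick $z_i$ and $\delta>0$ with the ball $B(z_i,\delta)\subset U$. I then choose $l$ with $y_l=z_i$ and $l2^{-l}$ small enough that the error above is less than $\delta$. Then $A_l\subset\mathcal N_{\T}(x,U)$, so the return set has positive lower density. The only delicate step is the existence of the separated sets $A_l$; the rest is bookkeeping with the $F$-norm, where only subadditivity is used and never homogeneity.
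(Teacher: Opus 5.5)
Your argument is the standard Bonilla--Grosse-Erdmann proof. That is all the paper relies on here, since the criterion is quoted from \cite{BGE07, BGE07_Erratum} without proof. The analytic part is correct once you use your refined choice of $N_l$: the tails of $y_1,\dots,y_l$ beyond $N_l$ are below $\varepsilon_l$, and $\|T_nS_n(y_l)-y_l\|<\varepsilon_l$ for $n\ge N_l$. Because $N_k+N_l>N_{\max(k,l)}$, the $k$-th block then contributes at most $2\varepsilon_{\max(k,l)}$. This gives $\|T_nx-y_l\|\le(2l+3)2^{-l}$ uniformly in $n\in A_l$.

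What is missing is a proof of the separated-sets lemma, which carries the whole combinatorial content. The lemma is true and is the one underlying the proof in \cite{BGE07}, so you may cite it. However, the construction you sketch does not yield it as written.

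No $A_l$ can be an arithmetic progression, even up to finitely many elements. Suppose the gaps of $A_1$ were eventually bounded by $d$. Then every large integer would lie within distance $d$ of $A_1$. The requirement $|n-m|\ge N_1+N_k$ for $n\in A_k$, $m\in A_1$ would then force $A_k$ to be finite as soon as $N_k\ge d$, and this happens because $(N_l)$ is in general unbounded. The low-index sets need arbitrarily long gaps that recur with positive frequency, so you have to work with blocks.

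Here is one construction that works.
\begin{enumerate}
\item Cut $\N$ into consecutive intervals $I_1,I_2,\dots$ with $\#I_j=j$.
\item Give $I_j$ the label $l$ if $j\in\{2^{l-1}(2i-1)\colon i\ge1\}$.
\item For each $I_j$ with label $l$, put into $A_l$ the points of $I_j$ that are congruent to $\min I_j+N_l$ modulo $2N_l$ and lie at distance at least $N_l$ from both endpoints of $I_j$.
\end{enumerate}
The required properties follow.
\begin{itemize}
\item The $A_l$ are pairwise disjoint and $\min A_l>N_l$.
\item Two points of $A_l$ in the same interval are at least $2N_l$ apart.
\item Points $n\in A_l$ and $m\in A_k$ in different intervals satisfy $|n-m|\ge N_l+N_k+1$.
\item An interval with label $l$ and $j\ge 8N_l$ contains at least $j/(4N_l)$ points of $A_l$. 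These intervals make up a proportion $2^{-l}$ of the total length $\sum_{j\le J}j\sim J^2/2$, and each single interval is negligible against that total. Hence $\ldens(A_l)\ge 2^{-l-2}/N_l>0$.
\end{itemize}
With this lemma supplied (or cited), your proof is complete.
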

Theorem~\ref{thm:FUC} leads to the following 
$\rho$-Frequent Universality Criterion.

\begin{thm}[$\rho$-Frequent Universality Criterion] \label{thm:rhoFUC}
Let $\rho$ be a calibration function and set, 
as in \eqref{uk},  $u_k \coloneqq \lfloor \rho^{-1}(k) \rfloor$. 
Let $X$ be an $F$-space, $Y$ a separable $F$-space 
and $T_n \colon X \to Y$, $n \in \N$, continuous linear mappings. 
Suppose that there exists a dense subset $Y_0$ of $Y$
and mappings $S_n \colon Y_0 \to X$, $n \in \N$,
such that, for each $y \in Y_0$, the following hold:
\begin{enumerate}[label=\emph{(\roman*)}, itemsep=2ex]
\item $\sum_{j=1}^k T_{u_k} S_{u_{k-j}}(y)$
	converges unconditionally, uniformly in $k\in\N$; \label{item:rhoFUCI}	
\item $\sum_{j=1}^\infty T_{u_k} S_{u_{k+j}}(y)$ converges unconditionally, 
	uniformly in $k \in \N$; \label{item:rhoFUCII}
\item $\sum_{j=1}^\infty S_{u_j} (y)$ converges unconditionally 
	in $X$; \label{item:rhoFUCIII}
\item $T_{u_j} S_{u_j} (y) \to y$ as $j \to \infty$. 
	\label{item:rhoFUCIV}
\end{enumerate}
Then the sequence $(T_n)$ is $\rho$-frequently universal.
\end{thm}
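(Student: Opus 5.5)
The plan is to apply the Frequent Universality Criterion (Theorem~\ref{thm:FUC}) to the subsequence $\widetilde{T}_k \coloneqq T_{u_k}$ and $\widetilde{S}_k \coloneqq S_{u_k}$, indexed by $k \geq k_0$. I will reindex so that the index starts at $1$, and set $\widetilde{S}_k = 0$ for the finitely many small indices where $u_k$ is undefined. Zero terms do not affect unconditional convergence, so nothing is lost by this padding.

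With these definitions, hypotheses \ref{item:rhoFUCI}--\ref{item:rhoFUCIV} are, term by term, exactly hypotheses (I)--(IV) of Theorem~\ref{thm:FUC} for the pair $(\widetilde{T}_k)$, $(\widetilde{S}_k)$:
\[
\widetilde T_k\widetilde S_{k-j}=T_{u_k}S_{u_{k-j}},\qquad
\widetilde T_k\widetilde S_{k+j}=T_{u_k}S_{u_{k+j}},\qquad
\widetilde S_j = S_{u_j},\qquad
\widetilde T_j\widetilde S_j=T_{u_j}S_{u_j}.
\]
The only care needed is the index shift, which changes finitely many terms in each series; finitely many terms do not affect unconditional convergence, and uniformity in $k$ is kept because the shift is the same for every $k$. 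Theorem~\ref{thm:FUC} then gives an $x\in X$ such that, for every nonempty open $U\subset Y$, the set $\mathcal{K}_U\coloneqq\{k : T_{u_k}x\in U\}$ has positive lower density in the usual sense.

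It remains to transfer this to positive lower $\rho$-density of $\returnSet{\T}{x}{U}$. This is the main step. Since $(u_k)$ is strictly increasing (Lemma~\ref{lemma:uk}), the map $k\mapsto u_k$ is injective and sends $\mathcal{K}_U$ into the return set. Write $\mathcal{K}_U=\{k_1<k_2<\cdots\}$. Positive lower density is equivalent to $k_m\le Cm$ for some constant $C$ (Lemma~\ref{lem:nkdensity} with $\rho(t)=t$). The elements $n_m\coloneqq u_{k_m}$ form a strictly increasing subsequence of the return set, and they satisfy
\[
\rho(n_m)\le \rho(\rho^{-1}(k_m)) = k_m\le Cm .
\]
By Lemma~\ref{lem:nkdensity} this gives $\rhodens\{n_m\}>0$. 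Any superset has lower $\rho$-density at least as large, since the counting function only increases. Hence $\rhodens(\returnSet{\T}{x}{U})>0$ for every $U$, so $x$ is $\rho$-frequently universal.

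I expect no real obstacle beyond this bookkeeping. The key points are that $u_k\le\rho^{-1}(k)$ together with monotonicity of $\rho$ makes the composition with $u$ preserve positive density, and that starting the sequence at $k_0$ is harmless.
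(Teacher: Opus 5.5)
Your proposal is correct and follows essentially the same route as the paper: apply the Frequent Universality Criterion to $(T_{u_n})$ and $(S_{u_n})$, then use $\rho(u_{k_m})\le k_m = \bigO(m)$ and Lemma~\ref{lem:nkdensity} to pass from positive lower density to positive lower $\rho$-density of the return set. Your extra bookkeeping about the index shift, zero-padding and supersets is harmless and only makes explicit what the paper leaves implicit.
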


\begin{proof}
Set $\T_n \coloneqq T_{u_n}$ and $\calS_n \coloneqq S_{u_n}$, for $n \in \N$.	
Note that  conditions \emph{\ref{item:rhoFUCI} -- \ref{item:rhoFUCIV}} of the theorem 
imply that  the sequences $(\T_n)$ and $(\calS_n)$ satisfy conditions 
\emph{\ref{item:FUCI} -- \ref{item:FUCIV}}  of Theorem \ref{thm:FUC}, 
and thus $(\T_n)$ is a frequently universal sequence.
Thus there exists $x \in X$ such that, for each nonempty open subset 
$U \subset Y$, there is a strictly increasing sequence $(n_k)$ of positive integers, 
with $n_k = \bigO(k)$, such that $T_{u(n_k)} x \in U$, for $k \in \N$.
	
Next, notice that
\begin{equation*}
\rho \big( u(n_k) \big) 
	= \rho\big( \lfloor \rho^{-1}(n_k) \rfloor \big)
		\leq \rho\big( \rho^{-1}(n_k) \big)
	= n_k = \bigO(k). 
\end{equation*}
Thus the return set $\nT{\T}$ contains the sequence $(u(n_k))_{k\geq 1}$
which, by Lemma~\ref{lem:nkdensity}, has positive lower $\rho$-density.
Hence, $\nT{\T}$ itself has positive lower $\rho$-density and the result follows.  
\end{proof}

A special case of Theorem \ref{thm:rhoFUC} is the following 
$\rho$-Frequent Hypercyclicity Criterion.

\begin{thm}[$\rho$-Frequent Hypercyclicity Criterion] \label{thm:rhoFHCC}
Let $T\colon X \to X$ be a continuous linear operator on a 
separable Fr\'echet space $X$. 
Suppose that there exist a dense subset $X_0 \subset X$ and 
a map $S\colon X_0\to X_0$ such that, for each $x \in X_0$, the following hold:
\begin{enumerate}[label=\emph{($\rho$-\roman*)}, itemsep=1ex]
\item $\sum_{j=1}^k 
	T^{u_k - u_{k-j}} x$ converges unconditionally, uniformly in $k \in \N$;
\item $\sum_{j=1}^\infty 
	S^{u_{j+k} - u_k} (x)$ converges unconditionally, uniformly in $k \in \N$;
\item TS(x) = x.
\end{enumerate}
Then the operator $T$ is $\rho$-frequently hypercyclic. 
\end{thm}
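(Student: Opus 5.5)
The plan is to deduce Theorem~\ref{thm:rhoFHCC} from Theorem~\ref{thm:rhoFUC}. Take $Y = X$, $T_n \coloneqq T^n$ and $Y_0 \coloneqq X_0$. On $X_0$ define $S_n \coloneqq S^n$ for $n \geq 0$, with $S^0$ the identity on $X_0$; this makes sense because $S$ maps $X_0$ into itself. Since $X$ is a separable Fr\'echet space, it is a separable $F$-space, so the standing hypotheses of Theorem~\ref{thm:rhoFUC} hold. Once conditions \ref{item:rhoFUCI}--\ref{item:rhoFUCIV} are checked, the sequence $(T^n)$ is $\rho$-frequently universal. That is precisely the statement that $T$ is $\rho$-frequently hypercyclic.

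The basic algebraic fact is that $TS = I$ on $X_0$, which iterates to $T^mS^m x = x$ for $x \in X_0$. Consequently, for $m \geq n$ we have $T^nS^m x = S^{m-n}x$, and for $m \leq n$ we have $T^nS^mx = T^{n-m}x$. Both identities hold because $S^{m-n}x$ and $S^m x$ lie in $X_0$, so the relation can be peeled off one factor at a time.

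The four conditions then follow directly.
\begin{itemize}
\item Condition \ref{item:rhoFUCIV}: $T_{u_j}S_{u_j}x = x$ exactly, so the required convergence is trivial.
\item Condition \ref{item:rhoFUCI}: because $u_{k-j} \leq u_k$, we get $T_{u_k}S_{u_{k-j}}x = T^{u_k-u_{k-j}}x$, so this is hypothesis ($\rho$-i).
\item Condition \ref{item:rhoFUCII}: because $u_{k+j} \geq u_k$, we get $T_{u_k}S_{u_{k+j}}x = S^{u_{k+j}-u_k}x$, so this is hypothesis ($\rho$-ii).
\item Condition \ref{item:rhoFUCIII}: the series $\sum_j S^{u_j}x$ must converge unconditionally. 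This does not literally appear among the hypotheses.
\end{itemize}

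The main obstacle, though a mild one, is therefore condition \ref{item:rhoFUCIII}. I plan to extract it from ($\rho$-ii) by fixing a single value $k = k_0$. That gives unconditional convergence of $\sum_{j} S^{u_{j+k_0}-u_{k_0}}x$. I would then apply this to the vector $x' \coloneqq S^{u_{k_0}}x \in X_0$, or equivalently use continuity of the linear map $S^{u_{k_0}}$. The difficulty is that $S$ is not assumed linear or continuous.

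So the first thing I will try is to rewrite $S^{u_{j+k_0}}x$ as $S^{u_{j+k_0}-u_{k_0}}(S^{u_{k_0}}x)$. This only composes powers of the same map, so it needs no linearity. Hypothesis ($\rho$-ii) applied to the element $S^{u_{k_0}}x \in X_0$ then shows that $\sum_j S^{u_{j+k_0}}x$ converges unconditionally. Adding back the finitely many initial terms does not affect unconditional convergence, which gives \ref{item:rhoFUCIII}.

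Indexing needs some care, since $u_k$ is only defined for $k \geq k_0$. I would set $u_k \coloneqq k - k_0 + u_{k_0}$ or a similar value for small $k$, or equivalently discard finitely many indices. This changes nothing about lower $\rho$-density or unconditional convergence.
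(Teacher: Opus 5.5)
Your proposal is correct and follows the paper's route: the paper presents Theorem~\ref{thm:rhoFHCC} as the special case $T_n = T^n$, $S_n = S^n$ of Theorem~\ref{thm:rhoFUC}, and your use of $TS = I$ on the $S$-invariant set $X_0$ correctly turns conditions (i), (ii) and (iv) into ($\rho$-i), ($\rho$-ii) and ($\rho$-iii). You also supply a detail the paper leaves implicit: you obtain condition~(iii) of Theorem~\ref{thm:rhoFUC} by applying ($\rho$-ii), for a fixed $k$, to the vector $S^{u_k}x \in X_0$, which needs no linearity of $S$, and then adding back finitely many terms.
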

%
%
% End of Section on general $\rho$-frequent hypercyclicity
%
%

\section{$\rho$-frequent hypercyclicity of weighted backward shifts}\label{sec:shifts}

As we will see, the $\rho$-Frequent Hypercyclicity Criterion leads to an explicit 
sufficient condition for a weighted backward shift on $\ell^p(\N)$ to be 
$\rho$-frequently hypercyclic. 
To see this, let $\w = (w_n)_{n=1}^\infty$ be a bounded sequence of positive real numbers and we denote by $\Bw$  the corresponding weighted backward shift defined on $\ell^p(\N)$, 
$1\leq p < \infty$. 
In terms of the canonical basis $(e_n)_{n=0}^\infty$ of $\ell^p(\N)$,
$\Bw$ is defined by $\Bw(e_0) = 0$ and $\Bw(e_n) = w_n e_{n-1}$, $n\geq 1$. 
Thus, 
\[
\Bw(x_0, x_1, x_2, \ldots) = (w_1x_1, w_2x_2, w_3x_3, \ldots).
\]
Applying the $\rho$-Frequent Hypercyclicity Criterion (Theorem~\ref{thm:rhoFHCC}), 
we derive a sufficient condition for $\Bw$ to be $\rho$-frequently hypercyclic. 
For convenience we write 
\[
\alpha_n = \frac{1}{w_1 w_2 \cdots w_n}, \quad n \geq 1.
\]

\begin{thm}\label{thm:sufficient_weighted_shift}
The weighted backward shift $\Bw$ on $\ell^p(\N)$, $1\leq p < \infty$, 
is $\rho$-frequently hypercyclic if, for each $i \in \N$, the series 
\beq\label{Bw_criterion}
\sum_{j=1}^\infty \alpha_{i + u_{j+k} - u_k}^{\,p}
\mbox{ 
is convergent, uniformly in $k \in\N$. }
\eeq
\end{thm}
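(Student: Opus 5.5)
We apply the $\rho$-Frequent Hypercyclicity Criterion (Theorem~\ref{thm:rhoFHCC}) with $X_0$ the space of finitely supported sequences, which is dense in $\ell^p(\N)$, and with $S$ the weighted forward shift defined by $S(e_n) = w_{n+1}^{-1} e_{n+1}$. Then $S$ maps $X_0$ into itself and $\Bw S(e_n) = e_n$ for every $n$. Since all three conditions are linear in $x$, and a finite sum of unconditionally (uniformly in $k$) convergent series is again such a series, it suffices to verify the conditions for each basis vector $x = e_i$, $i \geq 0$. Condition ($\rho$-iii) holds by construction.

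For condition ($\rho$-i), note that $\Bw^m e_i = 0$ whenever $m > i$. Since $(u_k)$ is strictly increasing by Lemma~\ref{lemma:uk}, we have $u_k - u_{k-j} \geq j$. Hence, for $j > i$, every term $T^{u_k-u_{k-j}}e_i$ vanishes, uniformly in $k$. The series in ($\rho$-i) therefore has at most $i$ nonzero terms, and its tail beyond the index $N = i$ is identically zero for all $k$. Unconditional convergence, uniformly in $k$, is then trivial.

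For condition ($\rho$-ii), we compute $S^m e_i = \frac{w_1\cdots w_i}{w_1 \cdots w_{i+m}} e_{i+m} = \frac{\alpha_{i+m}}{\alpha_i} e_{i+m}$. With $m_j = u_{j+k} - u_k$, the vectors $S^{m_j} e_i$ for distinct $j$ are supported on distinct coordinates $i + m_j$, because $m_j$ is strictly increasing in $j$. Consequently, for any finite $F \subset \N$,
\[
\Big\| \sum_{j\in F} S^{u_{j+k}-u_k} e_i \Big\|_p^p = \alpha_i^{-p} \sum_{j \in F} \alpha_{i+u_{j+k}-u_k}^{\,p}.
\]
Unconditional convergence, uniformly in $k$, thus reduces exactly to the following requirement: the tails $\sum_{j > N} \alpha^{\,p}_{i+u_{j+k}-u_k}$ tend to $0$ as $N \to \infty$, uniformly in $k$. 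This is precisely hypothesis \eqref{Bw_criterion}.

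The only subtle point is interpreting ``convergent, uniformly in $k$'' as uniform smallness of the tails, and noting that disjointness of supports turns the $p$-th power of the $\ell^p$-norm into a sum. The indexing range $k \geq k_0$ for $u_k$ is handled by shifting indices. Having verified the criterion, Theorem~\ref{thm:rhoFHCC} yields that $\Bw$ is $\rho$-frequently hypercyclic.
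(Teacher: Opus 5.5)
Your proof is correct and follows essentially the same route as the paper. Both take $X_0$ to be the finitely supported sequences and $S$ the weighted forward shift, check ($\rho$-i) and ($\rho$-iii) automatically, and reduce ($\rho$-ii) to basis vectors $e_i$, where disjoint supports turn the $\ell^p$-norm into exactly the series in \eqref{Bw_criterion}. Your treatment of ($\rho$-i) is slightly more explicit than the paper's remark that only finitely many distinct sums can occur: you note that $u_k-u_{k-j}\geq j$, so every term with $j>i$ vanishes.
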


\begin{proof}
As is standard, we take $X_0$ to be the set $D$ of finitely supported sequences,
which is a dense subset of  $\ell^p(\N)$. 
Define $S \colon D\to D$ by 
\[
Se_n = \frac{1}{w_{n+1}}\, e_{n+1}, \quad n \geq 0, 
\]
and we extend to $D$ by linearity. It is immediate that $\Bw Se_n =e_n$ for $ n \geq 0$, thus
showing that ($\rho$-iii) of Theorem~\ref{thm:rhoFHCC} is satisfied. 

To see that ($\rho$-i) is satisfied, note that for each sequence $x$ in $D$ 
there corresponds a positive integer $N = N(x)$ such that $\Bw^n x = 0$ for $n > N(x)$.
Thus, 
\[
\sum_{j=1}^k  \Bw^{u_k - u_{k-j}} x
\]
is, for each $k$, a sum of terms selected from 
$\Bw x$, $\Bw^2 x$, $\ldots$ $\Bw^{N(x)}x$, 
of which there are only finitely many such sums possible. 
Unconditional convergence, uniformly in $k$, is therefore automatic. 

In short, ($\rho$-i) and ($\rho$-iii) hold automatically irrespective 
of the sequence of weights $\w = (w_n)_{n=1}^\infty$. 

Next we interpret ($\rho$-ii) in the context of a weighted backward shift. 
We need, for each $x$ in $D$, that 
\[
\sum_{j=1}^\infty S^{u_{j+k} - u_k} x 
\]
converges unconditionally in $\ell^p(\N)$ and uniformly in $k \in \N$.
Since each $x$ in $D$ is a finite linear combination of basis elements,
it is sufficient that this  holds for each basis element $e_i$ individually. 
With $x = e_i$, $i\geq 0$, 
\[
S^j e_i  = \frac{1}{w_{i+1}w_{i+2} \cdots w_{i+j}}\, e_{i+j}\\
 = \left(  \prod_{r=1}^i w_r \right) \alpha_{i+j} e_{i+j} = c_i \alpha_{i+j} e_{i+j}
\]
where $c_i = \prod_{r=1}^i w_r$ depends only on $i$. 
This leads to 
\[
\Big\Vert \sum_{j=1}^\infty S^{u_{j+k} - u_k} e_i\Big\Vert_{p} = 
	c_i  \Big\Vert \sum_{j=1}^\infty \alpha_{i +u_{j+k} - u_k} e_{i +u_{j+k} - u_k}  
		\Big\Vert_{p}.
\]
Since, with $k$ (and $i$) fixed, $i +u_{j+k} - u_k$ is a strictly increasing 
sequence in $j$ [as $(u_j)_{j=1}^\infty$ is a strictly increasing sequence], 
each term in this last sum corresponds to a different basis element. 
Hence, 
\[
\Big\Vert \sum_{j=1}^\infty S^{u_{j+k} - u_k} e_i\Big\Vert_{p}^p = 
	c_i^{\, p} \sum_{j=1}^\infty \alpha_{i +u_{j+k} - u_k}^{\,p}.
\]
In short, Condition~($\rho$-ii) of Theorem~\ref{thm:rhoFHCC} is equivalent, 
in the case of the weighted backward shift $\Bw$ on $\ell^p(\N)$, to \eqref{Bw_criterion}.
\end{proof}
Let us be explicit regarding  what the condition \eqref{Bw_criterion}  
requires:  for each $i$ and to each $\epsilon$ positive there corresponds 
a natural number $N$ such that, for every $k$, 
\beq\label{Bw_criterion2}
\sum_{j =N}^\infty  \alpha_{i+ u_{j+k} - u_k}^{\, p} \leq \epsilon.
\eeq
The sufficient condition \eqref{Bw_criterion2} for $\Bw$ to be 
$\rho$-frequently hypercyclic certainly implies that 
$\inf_{n} \alpha_n = 0$, which is the characterisation, due to 
Salas \cite{Salas}, for $\Bw$ to be hypercyclic. 

\smallskip With $\rho(t) = t$, so that $u_k = k$ for each $k$, the condition 
\eqref{Bw_criterion} reduces to  
\beq\label{Bw_fhc_criterion}
\sum_{j=1}^\infty \alpha_j^{\,p} < \infty.
\eeq
In their seminal paper, Bayart and Grivaux \cite[Example~2.7]{BG06} showed that  
\eqref{Bw_fhc_criterion} is sufficient for $\Bw$ to be frequently hypercyclic. 
It was not until  2015 that the necessity was established by Bayart and Ruzsa 
\cite{BR15} (who also established the corresponding necessary and sufficient metric condition
for frequent hypercyclicity of weighted bilateral shifts acting on the space $\ell^p(\Z)$). 
Their proof rests, in part, on an extension of a famous result of Erd\H{o}s and S\'ark\"ozy
to the effect that if $\A$ is a subset of $\N$ with positive upper density then the set $\A - \A$ 
is syndetic, in that it has bounded gaps. Equivalently, the
set of $k$ for which $B_k = \A \cap (\A-k)$ is nonempty is syndetic. 
In \cite{BR15} they extend this result by proving that the set of $k$ 
for which $B_k$ has a uniformly positive upper density is also syndetic.  

It is possible to provide a much simpler argument that establishes the necessity of 
the condition \eqref{Bw_criterion} for $\Bw$  to be 
$\rho$-frequently hypercyclic on $\ell^p(\N)$ in the case of  a restricted class of 
weighted backward shifts.  
While this suffices to construct the examples in the following section, the question 
remains  whether the condition \eqref{Bw_criterion}  is necessary 
for $\rho$-frequent hypercyclicity of a general weighted backward shift
on $\ell^p(\N)$. 
In terms of the sequence $(u_k)_{k=1}^\infty$ from Subsection~\ref{subsection:2.2}, 
which we view as the canonical sequence with positive lower $\rho$-density, 
we have the following result. 

\begin{thm}\label{thm:Bw_criterion}
Let  $\Bw$  be a weighted backward shift on $\ell^p(\N)$, $1\leq p <\infty$, 
for which the weights satisfy $w_n \geq1$ for each $n$. 
Then, $\Bw$ is $\rho$-frequently hypercyclic 
if and only if 
\beq\label{Bw_criterion_simpler}
\sum_{j=1}^\infty \alpha_{u_j}^{\,p} < \infty.
\eeq
\end{thm}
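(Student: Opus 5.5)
For sufficiency we reduce to Theorem~\ref{thm:sufficient_weighted_shift}; for necessity we test a $\rho$-frequently hypercyclic vector against a single open set. The only place the hypothesis $w_n \geq 1$ enters is that it makes $(\alpha_n)$ non-increasing, since $\alpha_{n+1} = \alpha_n / w_{n+1} \leq \alpha_n$. Both directions then become comparison arguments between subsequences of $(\alpha_n)$, using the superadditivity of $(u_k)$ from Lemma~\ref{lemma:uk}.

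\emph{Sufficiency.} Assume \eqref{Bw_criterion_simpler}. We verify \eqref{Bw_criterion} and then invoke Theorem~\ref{thm:sufficient_weighted_shift}. Fix $i \geq 0$. By Lemma~\ref{lemma:uk}, $u_{j+k} - u_k \geq u_j$ for $j,k \geq k_0$, so $i + u_{j+k} - u_k \geq u_j$. Since $(\alpha_n)$ is non-increasing, this gives
\[
\alpha_{i+u_{j+k}-u_k}^{\,p} \leq \alpha_{u_j}^{\,p}.
\]
Hence, for every $N$ and every $k$,
\[
\sum_{j\geq N} \alpha_{i+u_{j+k}-u_k}^{\,p} \leq \sum_{j\geq N}\alpha_{u_j}^{\,p}.
\]
The right-hand side is small for large $N$, independently of $k$, which is exactly \eqref{Bw_criterion2}. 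The indices below $k_0$ only affect the starting point of the sequence $(u_k)$. They can be absorbed by shifting the index, or by redefining $\rho$ on an initial segment as the paper permits.

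\emph{Necessity.} Let $x$ be a $\rho$-frequently hypercyclic vector and take
\[
U = \{ y \in \ell^p(\N) : |y_0 - 1| < 1/2 \}.
\]
The $0$-th coordinate of $\Bw^n x$ is $w_1\cdots w_n x_n = x_n/\alpha_n$. So for every $n$ in the return set $\nT{\Bw}$ we have $|x_n| \geq \alpha_n/2$. List the return set as $(n_k)$. By Lemma~\ref{lem:nkdensity} there is an integer $C \geq 1$ with $\rho(n_k) \leq Ck$ for large $k$, hence $n_k \leq \rho^{-1}(Ck)$. Since $n_k$ is an integer, this gives $n_k \leq u_{Ck}$. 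By monotonicity, $\alpha_{u_{Ck}} \leq \alpha_{n_k} \leq 2|x_{n_k}|$, and therefore
\[
\sum_k \alpha_{u_{Ck}}^{\,p} \leq 2^p \norm{x}_p^p < \infty.
\]

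To pass from the subsequence $(u_{Ck})$ to the full series, group the indices $j$ into the blocks $C(k-1) < j \leq Ck$. On such a block $u_j \geq u_{C(k-1)}$, so $\alpha_{u_j} \leq \alpha_{u_{C(k-1)}}$. It follows that
\[
\sum_j \alpha_{u_j}^{\,p} \leq C \sum_k \alpha_{u_{Ck}}^{\,p} + (\text{finitely many terms}) < \infty,
\]
which is \eqref{Bw_criterion_simpler}. I expect this blocking step, together with the bookkeeping that turns $\rho(n_k) = \bigO(k)$ into $n_k \leq u_{Ck}$ with an integer constant $C$, to be the main point needing care. Everything else is a direct monotonicity comparison.
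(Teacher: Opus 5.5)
Your proof is correct and follows the paper's route. Sufficiency uses the same monotonicity-plus-superadditivity comparison to feed Theorem~\ref{thm:sufficient_weighted_shift}, and you usefully spell out the uniform tail bound \eqref{Bw_criterion2}, which the paper leaves implicit. Necessity tests a $\rho$-frequently hypercyclic vector against an open set near $e_0$, as in Proposition~\ref{prop:4.1}. The only variation is in passing to the full series: you use $n_k\le u_{Ck}$ at every index and group into blocks of length $C$, whereas Proposition~\ref{prop:4.2} restricts to dyadic indices and applies Cauchy's Condensation Test twice, so your version is a slightly more elementary form of the same comparison.
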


We first need a standard result that corresponds, both in its statement 
and its proof, to \cite[Proposition~9.17]{LinearChaos2011}
(cf.\ also \cite[Section~3]{BR15}).

\begin{prop}\label{prop:4.1}
Let  $\Bw$  be a weighted backward shift on $\ell^p(\N)$, $1\leq p <\infty$. 
If $\Bw$ is $\rho$-frequently hypercyclic then there is a sequence $ (n_j)_{j=1}^\infty$
of positive lower $\rho$-density for which 
\beq\label{nk_series}
\sum_{j=1}^\infty \alpha_{n_j}^{\,p} < \infty.
\eeq
\end{prop}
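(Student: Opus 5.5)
Following the proof of \cite[Proposition~9.17]{LinearChaos2011}, we take a $\rho$-frequently hypercyclic vector $x = (x_n)_{n\geq 0}$ for $\Bw$ and look at the open set
\[
U = \big\{ y \in \ell^p(\N) \,\colon\, \abs{y_0 - 1} < \tfrac12 \big\}.
\]
By Definition~\ref{defn:rhofhc}, the return set $\returnSet{\Bw}{x}{U}$ has positive lower $\rho$-density. We list its elements as a strictly increasing sequence $(n_j)_{j=1}^\infty$, and this will be the sequence we want.

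The key computation is the zeroth coordinate of $\Bw^n x$. For every $n$,
\[
(\Bw^n x)_0 = w_1 w_2 \cdots w_n\, x_n = \frac{x_n}{\alpha_n}.
\]
So $n \in \returnSet{\Bw}{x}{U}$ gives $\abs{x_n/\alpha_n - 1} < \tfrac12$, and hence $\abs{x_n} > \tfrac12 \alpha_n$. Applying this to each $n_j$ and using that the $n_j$ are distinct indices,
\[
\sum_{j=1}^\infty \alpha_{n_j}^{\,p} \leq 2^p \sum_{j=1}^\infty \abs{x_{n_j}}^p \leq 2^p \norm{x}_p^p < \infty .
\]
This is exactly \eqref{nk_series}.

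None of the steps is really an obstacle. Two small points need checking. First, we must choose an open set $U$ that controls only the zeroth coordinate; the half-plane-type set above works. Second, the return set must be infinite so that it can be enumerated as a sequence. This holds because positive lower $\rho$-density together with $\rho(N)\to\infty$ forces the counting function $\#\{n\in\A\,\colon\,n\leq N\}$ to tend to infinity. Beyond this, no property of the calibration function $\rho$ is used, so the argument is the same as in the case $\rho(t) = t$.
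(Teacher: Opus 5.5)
Your proof is correct and is essentially the paper's argument. The paper takes $U$ to be the open ball of radius $1$ centred at $2e_0$, which gives $\vert x_{n_j}\vert > \alpha_{n_j}$ directly, whereas your set $\{y : \vert y_0 - 1\vert < \tfrac12\}$ gives $\vert x_{n_j}\vert > \tfrac12\alpha_{n_j}$. Apart from that choice of $U$, the zeroth-coordinate computation $(\Bw^n x)_0 = x_n/\alpha_n$ and the comparison with $\Vert x\Vert_p^p$ are the same as in the paper.
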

\begin{proof}
Let $x$ be a $\rho$-frequently hypercyclic vector for $\Bw$. Taking $U$ to be the ball 
in $\ell^p(\N)$ with centre $2e_0$ and radius $1$, 
there is a sequence  $ (n_j)_{j=1}^\infty$ of positive lower $\rho$-density such that 
\[
\Bw^{n_j}x = (w_1w_2 \cdots w_{n_j}x_{n_j}, \ldots ) \in U, \mbox{ for each } j. 
\]
But then $\vert x_{n_j} / \alpha_{n_j} - 2\vert < 1$, which leads to 
$\vert x_{n_j} \vert > \alpha_{n_j}$ for each $j$. 
Since $x$ is in $\ell^p(\N)$, we have 
\[
\sum_{j=1}^\infty \alpha_{n_j}^{\,p} \leq \sum_{j=1}^\infty \vert x_{n_j}\vert^{\,p} 
	\leq \Vert x \Vert_p^{\,p} < \infty.
\qedhere
\]
\end{proof}

The final ingredient in the proof of Theorem~\ref{thm:Bw_criterion} is the following 
result on series that uses  Cauchy's Condensation Test. 
In the special case $\rho(t) = t$, we see that if $(a_n)_n$ is a positive decreasing sequence 
with $\sum_j a_{n_j}$ convergent  where $n_j = O(j)$, then  the full series $\sum_j a_j$ is 
also convergent. 

\begin{prop}\label{prop:4.2}
Suppose that $(a_n)_{n=1}^\infty$ is a non-increasing sequence of positive 
real numbers. 
If  $\A = (n_j)_{j=1}^{\infty}$ has positive lower $\rho$-density 
and  $\sum_{j=1}^\infty a_{n_j}< \infty$, then $\sum_{j=1}^\infty a_{u_j} < \infty$.
\end{prop}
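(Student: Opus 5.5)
The plan is to compare the subsequence $(n_j)$ with a dilated copy of the canonical sequence $(u_k)$, and then use monotonicity of $(a_n)$ to pass from the dilated copy back to $(u_j)$ itself, in the spirit of Cauchy's Condensation Test.

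First, since $\A$ has positive lower $\rho$-density, Lemma~\ref{lem:nkdensity} gives $\rho(n_j) = \bigO(j)$. So there are a positive integer $C$ and an index $j_0$ such that $\rho(n_j) \leq Cj$ for all $j \geq j_0$. We may also take $j_0$ large enough that $Cj_0 \geq k_0$, so that $u_{Cj}$ is defined.

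Second, since $\rho$ is strictly increasing, $\rho(n_j)\le Cj$ gives $n_j \leq \rho^{-1}(Cj)$. As $n_j$ is an integer, this yields $n_j \leq \lfloor \rho^{-1}(Cj)\rfloor = u_{Cj}$. Because $(a_n)$ is non-increasing, we get $a_{u_{Cj}} \leq a_{n_j}$ for $j\ge j_0$, and therefore
\[
\sum_{j\geq j_0} a_{u_{Cj}} \leq \sum_{j\geq j_0} a_{n_j} < \infty.
\]

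Third, we recover the full series over $(u_m)$ by grouping indices into blocks of length $C$. For $Cj \leq m < C(j+1)$ we have $u_m \geq u_{Cj}$, since $(u_k)$ is increasing by Lemma~\ref{lemma:uk}. Hence $a_{u_m} \leq a_{u_{Cj}}$, because $(a_n)$ is non-increasing. Each block contains exactly $C$ indices, so
\[
\sum_{m \geq Cj_0} a_{u_m} \leq C \sum_{j \geq j_0} a_{u_{Cj}} < \infty.
\]
The finitely many remaining terms do not affect convergence, so $\sum_j a_{u_j}<\infty$.

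The only step needing care is the second: turning the $\bigO$-bound into the integer inequality $n_j \le u_{Cj}$. This uses that $n_j$ is an integer (so the floor costs nothing) and that $C$ can be chosen to be an integer, so that $u_{Cj}$ is a term of the canonical sequence. Everything else is bookkeeping of initial indices.
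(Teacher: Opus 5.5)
Your proof is correct. It shares the paper's key comparison: from $\rho(n_j)\le Cj$ and the integrality of $n_j$ you get $n_j\le\lfloor\rho^{-1}(Cj)\rfloor=u_{Cj}$, and you then use monotonicity of $(a_n)$. Where you differ is in how you get from there to convergence of $\sum_j a_{u_j}$.

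The paper normalises the constant to a power of two, $C=2^r$, and uses the comparison only along dyadic indices, $n(2^j)\le u(2^{r+j})$. It then invokes Cauchy's Condensation Test twice. The first use turns $\sum_j a_{n_j}<\infty$ into $\sum_j 2^j a_{n(2^j)}<\infty$, relying on $(a_{n_j})_j$ being non-increasing. The second turns the resulting bound on $\sum_j 2^{j}a_{u(2^{j})}$ back into $\sum_j a_{u_j}<\infty$.

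You instead use the comparison for every $j$, which gives $\sum_{j\ge j_0} a_{u_{Cj}}\le\sum_{j\ge j_0} a_{n_j}$ directly. You then recover the full series with the block estimate $\sum_{m\ge Cj_0}a_{u_m}\le C\sum_{j\ge j_0}a_{u_{Cj}}$, which is in effect a ``$C$-adic'' condensation step proved by hand.

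Your route is self-contained and slightly more direct. It needs neither the power-of-two normalisation nor the condensation test. The paper's version simply packages the same reasoning through a standard named test. Both arguments rest on the same ingredients: Lemma~\ref{lem:nkdensity}, the floor comparison with $(u_k)$, and monotonicity of $(a_n)$ and $(u_k)$.
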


\begin{proof}Since $\A$ has positive lower $\rho$-density, it follows from Lemma~\ref{lem:nkdensity} that 
$\rho(n_j) = O(j)$. 
Hence, we may take it that $\rho(n_j) \leq 2^r j$ for some positive integer $r$, this for each $j$. 
Equivalently, since $n_j$ is an integer, $n_j \leq \lfloor \rho^{-1}(2^r j) \rfloor$. 
In particular, 
\[
n(2^j) \leq  \lfloor \rho^{-1}(2^{r+j})\rfloor= u(2^{r+j}).
\]
By the assumed convergence of the series $\sum_{j=1}^\infty a_{n_j}$ and by Cauchy's Condensation Test, 
\[
\sum_{j=1}^\infty 2^j a_{n(2^j)} < \infty.
\]
Since the sequence $(a_n)$ is non-increasing, $a_{n(2^j)} \geq a_{u(2^{r+j})}$. 
It follows that  
\[
\sum_{j=r+1}^\infty 2^j a_{u(2^j)}   = \sum_{j=1}^\infty 2^{r+j} a_{u(2^{r+j})} 
	\leq  2^{r} \sum_{j=1}^\infty 2^j a_{n(2^j)} < \infty.
\]
Hence, again by Cauchy's Condensation Test, $\sum_{j=1}^\infty a_{u_j} < \infty$.  
\end{proof}

\begin{proof}[Proof of Theorem~\ref{thm:Bw_criterion}]
Since the weights satisfy $w_n\geq1$ for each $n$, the sequence $(\alpha_n)_{n=1}^\infty$ is 
non-increasing. In \eqref{Bw_criterion}, therefore, 
$\alpha_{i + u_{j+k} - u_k} \leq \alpha_{u_{j+k} - u_k}$. 
Moreover, by  \eqref{uk_convex}, 
$u_{j+k}-u_k \geq u_j$ and so, for each $i$ and $k$,
\[
\sum_{j=1}^\infty \alpha_{i + u_{j+k} - u_k}^{\,p} 
	\leq \sum_{j=1}^\infty \alpha_{u_{j+k} - u_k}^{\,p}
	\leq \sum_{j=1}^\infty \alpha_{u_j}^{\,p}.
\]
That is, \eqref{Bw_criterion_simpler} implies \eqref{Bw_criterion}. 
Since \eqref{Bw_criterion} is a general sufficient condition for the
$\rho$-frequent hypercyclicity of $\Bw$, 
 condition \eqref{Bw_criterion_simpler} is a sufficient condition for 
$\rho$-frequent hypercyclicity of $\Bw$ if all the weights are greater than or equal to 1. 

To establish the necessity of the condition \eqref{Bw_criterion_simpler} for this 
restricted family of weighted backward shifts, Proposition~\ref{prop:4.1} 
provides the existence of a sequence  $ (n_j)_{j=1}^\infty$
of positive lower $\rho$-density for which 
$\sum_{j=1}^\infty \alpha_{n_j}^{\,p}$ is convergent.
We then deduce from Proposition~\ref{prop:4.2}, 
with $a_n = \alpha_n^p$, that 
$\sum_{j=1}^\infty \alpha_{u_j}^{\,p}$ is convergent, which is \eqref{Bw_criterion_simpler}.
\end{proof}
%
%
% End of Section on $\rho$-frequent hypercyclicity 
%	of weighted backward shifts
%
%

\section{$\rho$-frequent hypercyclicity as $\rho$ varies}
\label{sec:varying_rho}

It is natural to ask if the notion of $\rho$-frequent hypercyclicity depends, 
as it should do, on the choice of calibration function $\rho$. 
Theorem~~\ref{thm:Bw_criterion} is used in the  construction of 
examples of weighted backward shifts that demonstrate 
this dependence. 
If $\rho_1(t) \gg \rho_2(t)$ as $t \to \infty$, we would like an example of 
an operator that is  $\rho_2$-frequently hypercyclic but not 
$\rho_1$-frequently hypercyclic. 
For simplicity, the weighted shifts constructed live on $\ell^1(\N)$ rather than a general 
$\ell^{\,p}(\N)$. 
% Theorem
\begin{thm}\label{thm:rho1_rho2}
Suppose that both $\rho_1$ and $\rho_2$ are calibration functions and 
that $\rho_1(t) = \psi(\rho_2(t))$, where $\psi(t) / t$  is an increasing function of $t$ that is unbounded as $t \to \infty$. 
There then exists a bounded sequence of weights $\w =(w_n)_{n=1}^\infty$ such that:
\begin{enumerate}[label=(\roman*), itemsep=1ex]
\item $\Bw$ is  $\rho_2$-frequently hypercyclic on $\ell^1(\N)$;
\item $\Bw$ is not $\rho_1$-frequently hypercyclic on $\ell^1(\N)$.
\end{enumerate}
\end{thm}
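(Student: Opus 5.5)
The plan is to reduce everything to Theorem~\ref{thm:Bw_criterion} with $p=1$. Write $u^{(i)}_j=\lfloor \rho_i^{-1}(j)\rfloor$ for the canonical sequences of $\rho_1,\rho_2$. I will construct weights $1\le w_n\le 2$, so that $\alpha_n$ is non-increasing, with
\[
\sum_j \alpha_{u^{(2)}_j}<\infty \qquad\text{and}\qquad \sum_j \alpha_{u^{(1)}_j}=\infty .
\]
Theorem~\ref{thm:Bw_criterion} then gives (i) and (ii) at once. Since it is equivalent to prescribe a non-increasing positive sequence $(\alpha_n)$ with $\alpha_0=1$ and $1\le \alpha_{n-1}/\alpha_n\le 2$, setting $w_n=\alpha_{n-1}/\alpha_n$, I will work directly with $\alpha$.

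The key relation is $\rho_1^{-1}=\rho_2^{-1}\circ\psi^{-1}$. It gives $u^{(1)}_j\le u^{(2)}_m$ exactly when $\rho_2^{-1}(\psi^{-1}(j))<u^{(2)}_m+1$, which roughly means $j\lesssim \psi(m)$. Thus, up to floors, the $\rho_1$-sequence hits the first $m$ points of the $\rho_2$-sequence about $\psi(m)$ times, and $\psi(m)/m\to\infty$.

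First I build a staircase. Choose integers $s_0<s_1<s_2<\cdots$ growing so fast that $\psi(s_m)/s_m\ge m^2$ (possible since $\psi(t)/t$ is unbounded) and $\psi(s_{m-1})\le \tfrac12\psi(s_m)$. Put $c_m=1/(m^2 s_m)$; this is decreasing in $m$. Let $N_m=u^{(2)}_{s_m}$ and set the staircase value $\beta_n=c_m$ for $N_{m-1}<n\le N_m$.
\begin{itemize}
\item For the $\rho_2$-sum, the indices $j\in(s_{m-1},s_m]$ give $\sum_j\beta_{u^{(2)}_j}\le\sum_m c_m s_m=\sum_m m^{-2}<\infty$.
\item For the $\rho_1$-sum, every $j$ with $\psi(s_{m-1})+1< j\le \psi(s_m)$ has $N_{m-1}<u^{(1)}_j\le N_m$, up to an additive constant in the index that I handle with the floors as in Subsection~\ref{subsection:2.2}. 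There are at least about $\tfrac12\psi(s_m)$ such $j$, so this block contributes at least about $\tfrac12\psi(s_m)c_m\ge \tfrac12$, and the sum diverges.
\end{itemize}

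Second, I smooth the staircase, since its jumps $c_m/c_{m+1}$ would give unbounded weights. Define
\[
\alpha_n=\max\bigl(c_{m+1},\,c_m2^{-(n-N_m)}\bigr)\quad\text{for } N_m<n\le N_{m+1},
\]
after normalising at the start. Then $\alpha$ is non-increasing, consecutive ratios lie in $[1,2]$, and $\alpha\ge\beta$, so divergence along $u^{(1)}$ survives. The excess $\alpha-\beta$ lives on the transition stretches after each $N_m$, where $\alpha_n\le c_m2^{-(n-N_m)}$. Because $u^{(2)}$ is strictly increasing, each $n$ is hit at most once, so the extra contribution along $u^{(2)}$ is at most $\sum_m 2c_m<\infty$.

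One point needs care: a transition stretch must fit inside $(N_m,N_{m+1}]$, or else the max formula simply yields $c_{m+1}$ early, which is harmless. I would also shrink the $c_m$ if needed so that $\alpha_n\to0$ is not required beyond what the sums already give.

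I expect the main obstacle to be the bookkeeping with the floors in comparing $u^{(1)}_j$ with $u^{(2)}_m$. One has $u^{(1)}_j=\lfloor\rho_2^{-1}(\psi^{-1}(j))\rfloor$, where $\psi^{-1}(j)$ is not an integer. I would handle this by monotonicity: $u^{(1)}_j\le u^{(2)}_{\lceil\psi^{-1}(j)\rceil}$. Hence $u^{(1)}_j\le N_m$ whenever $\psi^{-1}(j)\le s_m-1$, that is $j\le\psi(s_m-1)$. This is still at least about $\tfrac12\psi(s_m)$ once the $s_m$ are chosen sparse enough, using that $\psi$ is increasing and that $\psi(s_m-1)\ge (s_m-1)\psi(s_m)/s_m$ by the monotonicity of $\psi(t)/t$.
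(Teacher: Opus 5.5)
Your construction is correct in substance, and it reaches the result by a genuinely different route from the paper. Two local claims need repair first.

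\textbf{Two slips.} First, the floor bookkeeping in your last paragraph is wrong. Monotonicity of $\psi(t)/t$ gives $\psi(s_m-1)\le (s_m-1)\psi(s_m)/s_m$, not $\ge$. Moreover $\psi(s_m-1)$ need not be comparable to $\psi(s_m)$: for $\rho_1(t)=t$ and $\rho_2(t)=\log\log t$ one has $\psi(s)=e^{e^s}$, so $\psi(s-1)=\psi(s)^{1/e}$. The detour is in any case unnecessary. If $j\le\psi(s_m)$ then $\psi^{-1}(j)\le s_m$, so $u^{(1)}_j=\lfloor\rho_2^{-1}(\psi^{-1}(j))\rfloor\le N_m$. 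If $j-1>\psi(s_{m-1})$ then $u^{(1)}_{j-1}\ge N_{m-1}$, hence $u^{(1)}_j>N_{m-1}$ because $u^{(1)}$ is strictly increasing. So the range $\psi(s_{m-1})+1<j\le\psi(s_m)$ in your first bullet is exactly right, and it contains at least $\tfrac12\psi(s_m)-2$ integers.

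Second, a transition stretch that does not fit is \emph{not} harmless. In that case $\alpha_{N_{m+1}}=c_m2^{-(N_{m+1}-N_m)}>c_{m+1}$, while $\alpha_{N_{m+1}+1}=\max(c_{m+2},c_{m+1}/2)$, so the weight there can exceed $2$. You must ensure $2^{N_{m+1}-N_m}\ge c_m/c_{m+1}$. Since $N_{m+1}-N_m\ge s_{m+1}-s_m$ and $c_m/c_{m+1}\le 4s_{m+1}/s_m$, it suffices to impose also $s_{m+1}\ge 2s_m\ge 8$. Alternatively, smooth recursively by $\alpha_n=\max(\beta_n,\alpha_{n-1}/2)$, as in the proof of Lemma~\ref{lem:series}. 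Then no fitting is needed, and $\sum_n(\alpha_n-\beta_n)\le\sum_m c_m$.

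\textbf{Comparison with the paper.} The paper works at the scale of a single step of $\rho_2$:
\begin{itemize}
\item it uses $\psi(k+1)-\psi(k)\ge\psi(k)/k$, which relies on monotonicity of $\psi(t)/t$, to place $a_k\approx\psi(k)/k$ points $u_{1,j}$ in each $[u_{2,k},u_{2,k+1})$;
\item it takes $b_k$ from Lemma~\ref{lem:series}, which also needs $(a_k)$ increasing;
\item it puts the non-unit weights $b_{k-1}/b_k$ only at the points $u_{2,k}$, so $\alpha$ is constant between them and $\alpha_{u_{2,k}}=b_k$.
\end{itemize}
You instead group into sparse blocks $(N_{m-1},N_m]$ with explicit levels $c_m=1/(m^2s_m)$, and you smooth geometrically in $n$ rather than in $k$. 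Your staircase and its smoothing do the job of $\tilde b$ and $b$ in Lemma~\ref{lem:series}, so no separate series lemma is needed. Your route also uses only that $\psi$ is increasing with $\limsup_{t\to\infty}\psi(t)/t=\infty$, so once repaired it proves the theorem without monotonicity of $\psi(t)/t$. The paper's route, in exchange, gives more rigid weights (equal to $1$ off $\{u_{2,k}\}$) and isolates a reusable lemma on series.
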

\noindent Thus, for example, there is a bounded weighted backward shift on $\ell^1(\N)$ that is 
$(t/\log t)$-frequently hypercyclic but not frequently hypercyclic. 

\smallskip\noindent The proof of Theorem~\ref{thm:rho1_rho2} requires the following  fact regarding series.
\begin{lemma}\label{lem:series}
Let $(a_n)_{n=1}^\infty$ be an increasing, unbounded sequence of 
positive real numbers. 
Then, there exists a sequence $(b_n)_{n=1}^\infty$ of positive real numbers 
such that 
\begin{align}
\frac{1}{2} b_{n-1} \leq b_n \leq b_{n-1},
	\label{series1}\\
\sum_{n=1}^\infty a_n\,b_n =  \infty,
	\label{series2}
	\shortintertext{and}
\sum_{n=1}^\infty b_n< \infty.
	\label{series3}
\end{align}
\end{lemma}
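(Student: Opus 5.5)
The plan is to build $(b_n)$ from blocks on which it is constant, with the value halved when passing from one block to the next. Condition \eqref{series1} then holds automatically: inside a block $b_n=b_{n-1}$, and at a block boundary $b_n=\tfrac12 b_{n-1}$.

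Since $a_n\uparrow\infty$, we can choose indices $N_1<N_2<\cdots$ with $a_{N_k}\geq 4^k$; this is the only place the unboundedness of $(a_n)$ is used. On the block $B_k=\{N_{k-1}+1,\dots,M_k\}$ we set $b_n=2^{-k}c_k$. To keep \eqref{series1} exact we will simply take the value $2^{-k}$ on the $k$-th block and control the sums through the block lengths. The difficulty is that the two conditions pull against each other. Convergence of $\sum b_n$ needs $\sum_k |B_k|\,2^{-k}<\infty$. Divergence of $\sum a_nb_n$ needs $\sum_k 2^{-k}\sum_{n\in B_k}a_n=\infty$. So each block should be short, but the last terms of each block should sit where $a_n$ is huge.

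The fix is to put every block boundary exactly at a point where $a$ is large, and to use blocks of length $1$ when possible. Choose $m_1<m_2<\cdots$ such that $a_{m_k}\geq 4^{k}$, and pass to a subsequence so that the $m_k$ are separated as needed. Between $m_{k-1}$ and $m_k$ there are many indices. If $b$ halves at every step, then $b$ decays geometrically and $\sum a_nb_n$ may converge; if $b$ stays constant, $\sum b_n$ may diverge. So we shall halve only once per block, and make the number of blocks grow. Concretely, define $b$ by $b_1=1$ and, for $n\geq2$, let $b_n=b_{n-1}/2$ if $b_{n-1}a_{n-1}\geq 1$ and $b_n=b_{n-1}$ otherwise. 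There is a cleaner way to say what this does: let $b$ be constant until the partial sum of $a_nb_n$ over the current block reaches $1$, then halve. Each block then contributes at least $1$ to $\sum a_nb_n$, so that sum diverges, giving \eqref{series2}. Since $(a_n)$ is increasing and unbounded, every block is finite, so infinitely many blocks occur.

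The main obstacle is \eqref{series3}, namely $\sum_k 2^{-k}L_k<\infty$, where $L_k$ is the length of the $k$-th block. The block rule gives $L_k\,a_{\text{start}}\,2^{-k}<1+a_{\text{end}}2^{-k}$. Hence $L_k2^{-k}\lesssim 1/a_{\text{start}}+2^{-k}a_{\text{end}}/a_{\text{start}}$, which need not be summable. I would therefore modify the rule so that a block ends as soon as its partial sum of $a_nb_n$ reaches $\epsilon_k$, with $\epsilon_k$ chosen so that $\sum\epsilon_k=\infty$ but the lengths are controlled. The first thing to try is $\epsilon_k=1/k$. Then $L_k2^{-k}\leq \epsilon_k/a_{\text{start}(k)}+2^{-k}$. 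Because $a_n\to\infty$, $a_{\text{start}(k)}\to\infty$, but this alone does not make $\sum \epsilon_k/a_{\text{start}(k)}$ converge. So I would choose $\epsilon_k$ adaptively as $\epsilon_k=\min(1,\sqrt{a_{\text{start}(k)}})\cdot\delta_k$, or more simply pick $\epsilon_k$ after seeing $a_{\text{start}(k)}$. If $\sum 1/a_{\text{start}(k)}$ converges, take $\epsilon_k=1$. If it diverges, take $\epsilon_k=1/\sqrt{\sum_{i\le k}1/a_{\text{start}(i)}}\cdot a_{\text{start}(k)}\cdot(\text{normalisation})$, using the standard Abel–Dini fact that for divergent positive $d_k$ with partial sums $D_k$, $\sum d_k/D_k=\infty$ while $\sum d_k/D_k^2<\infty$. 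Checking that this bookkeeping closes is the step I expect to need the most care.
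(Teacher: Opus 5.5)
Your reduction is correct, but the proof stops exactly where the real work begins. Put $b_n=2^{-k}$ on the $k$-th block, with first index $s_k$ and length $L_k\ge1$, and end the block at the first $m$ with $2^{-k}\sum_{n=s_k}^{m}a_n\ge\epsilon_k$. Then \eqref{series1} is automatic and $\sum_n a_nb_n\ge\sum_k\epsilon_k$. Minimality of $L_k$ and monotonicity of $(a_n)$ give $2^{-k}L_k\le\epsilon_k/a_{s_k}+2^{-k}$. So everything hinges on choosing $\epsilon_k$ with $\sum_k\epsilon_k=\infty$ and $\sum_k\epsilon_k/a_{s_k}<\infty$, and you do not do this.

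The rule you sketch does not achieve it. The case split on whether $\sum_k 1/a_{s_k}$ converges refers to block starts that are themselves determined by the thresholds being chosen. In the divergent case, the displayed formula read as written gives $\epsilon_k/a_{s_k}=c/\sqrt{D_k}$ with $D_k=\sum_{i\le k}1/a_{s_i}\le k/a_{s_1}$. Hence $\sum_k\epsilon_k/a_{s_k}\ge c\sqrt{a_{s_1}}\sum_k k^{-1/2}=\infty$, and \eqref{series3} is lost. The Abel--Dini theorem is beside the point here, since the convergence or divergence of $\sum_k 1/a_{s_k}$ plays no role.

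The missing observation is that every block has length at least one. So $s_k\ge k$, and $a_{s_k}\to\infty$ whatever thresholds are used; a sparse $0/1$ choice then suffices. Define $\epsilon_k$ inductively: $\epsilon_k=1$ if $a_{s_k}\ge 2^{j+1}$, where $j$ is the number of indices $i<k$ with $\epsilon_i=1$, and $\epsilon_k=0$ otherwise (a block of length one). Then $\epsilon_k=1$ for infinitely many $k$, which gives \eqref{series2}. Also $\sum_k\epsilon_k/a_{s_k}\le\sum_{j\ge0}2^{-j-1}=1$, so $\sum_n b_n=\sum_k 2^{-k}L_k<\infty$.

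Completed this way, your argument is a genuinely different route from the paper's. The paper first builds a non-increasing $\tilde b_n=1/(n_{k-1}\#A_k)$ on the level sets $A_k=\{n\colon n_{k-1}\le a_n<n_k\}$, for which $\sum_n\tilde b_n<\infty$ and $\sum_n a_n\tilde b_n=\infty$. Only then does it enforce \eqref{series1} through the envelope $b_n=\max\{\tfrac12 b_{n-1},\tilde b_n\}$, which at most doubles $\sum_n b_n$. Your construction builds \eqref{series1} in from the start and needs no smoothing step.
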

\begin{proof}
We first produce a decreasing sequence $(\tilde b_n)_{n=1}^\infty$ 
that satisfies \eqref{series2} and \eqref{series3} and then 
adjust it so that the modified sequence decreases at most geometrically. 
We choose an increasing sequence of positive integers $(n_k)_{k=0}^\infty$
and set, for $k \geq 1$, 
\[
A_k = \big\{ n \in\N \colon n_{k-1} \leq a_n < n_k\big\}.
\]
The sequence $(n_k)$ is constructed inductively. We set $n_0 = a_1$
and $n_1 = a_i$ where $i$ is the first time that $a_i > a_1$.
Once $n_k$ is chosen, we choose $n_{k+1}$ so that $A_{k+1}$ is nonempty, 
(i) $n_{k+1} \geq 2^{k+1}$ and 
%(ii) $n_k (\#A_{k+1}) \geq n_{k-1} (\# A_k)$.
(ii) $\#A_{k+1} \geq \# A_k$.
This is possible since $(a_n)_{n=1}^\infty$ is an increasing 
and unbounded sequence.  

For each $k$, and for $n$ in $A_k$, we set 
\[
\tilde b_n = \frac{1}{n_{k-1} (\#A_k)}. 
\]
The condition (ii) above, together with $n_k \geq n_{k-1}$, 
ensures that the sequence $(\tilde b_n)_{n=1}^\infty$ is non-increasing. 
Since $n_k \geq 2^k$, 
\beq\label{btilde1}
\sum_{n=1}^\infty \tilde b_n = \sum_{k=1}^\infty \left(\, \sum_{n \in A_k} \tilde b_n\right)
	= \sum_{k=1}^\infty \frac{1}{n_{k-1}} \leq \sum_{k=1}^\infty 2^{-k+1} < \infty.
\eeq
On the other hand, 
\begin{align}
\sum_{n=1}^\infty a_n \tilde b_n & = \sum_{k=1}^\infty \left(\, \sum_{n \in A_k} a_n \tilde b_n\right)
		= \sum_{k=1}^\infty \left( \frac{1}{n_{k-1} (\#A_k)} \sum_{n \in A_k} a_n\right)
			\nonumber\\
	& \geq \sum_{k=1}^\infty \left( \frac{1}{n_{k-1} (\#A_k)} \sum_{n \in A_k}n_{k-1}\right)
		= \sum_{k=1}^\infty 1 =  \infty. \label{btilde2}
\end{align}
To guarantee \eqref{series1}, we replace the sequence $(\tilde b_n)_{n=1}^\infty$ by a 
sequence $(b_n)_{n=1}^\infty$ of possibly larger terms 
while retaining properties \eqref{series2} and \eqref{series3}. 
We set $b_1 = \tilde b_1$ and construct $b_n$ inductively by
\[
b_n = \max \big\{ \tfrac{1}{2} b_{n-1}, \tilde b_n \big\}, \quad n \geq 2.
\]
Since $b_n \geq \tilde b_n$, \eqref{series2} follows directly from \eqref{btilde2}.
For \eqref{series1}, we have $b_n \geq \tfrac{1}{2}b_{n-1}$ by construction. 
To see that the sequence $(b_n)_{n=1}^\infty$ is non-increasing, 
there are two cases: if $b_n = \tfrac{1}{2}b_{n-1}$ then certainly $b_n \leq b_{n-1}$, 
while if $b_n = \tilde b_n$ then $b_n = \tilde b_n \leq \tilde b_{n-1} \leq b_{n-1}$
using, here, that the sequence $(\tilde b_n)_{n=1}^\infty$ is non-increasing. 
This verifies \eqref{series1}.

Finally, we verify \eqref{series3}, the convergence of the series $\sum_n b_n$.
Let  $(m_k)_{k=1}^\infty$ be the strictly increasing sequence of those positive integers
for which $b_{m_k} = \tilde b_{m_k}$. Thus, $b_n = \tfrac{1}{2} b_{n-1}$ if 
$n \not= m_k$ for some $k$. 
In words, the sequence $(b_n)$ decays exactly geometrically as $n$ runs 
between successive integers $m_k$. 
For each $k$, therefore, 
\[
\sum_{n = m_k}^{m_{k+1}-1} b_n = \tilde b_{m_k} + \tfrac{1}{2} \tilde b_{m_k} 
	+ \big(\tfrac{1}{2})^2 \tilde b_{m_k} + \cdots 
		+ \big(\tfrac{1}{2})^{m_{k+1}-m_k} \tilde b_{m_k}
\leq 2\, \tilde b_{m_k}.
\]
Hence, 
\[
\sum_{n=1}^\infty b_n \leq \sum_{k=1}^\infty \left( \sum_{n = m_k}^{m_{k+1}-1} b_n \right)
	\leq 2\sum_{k=1}^\infty \tilde b_{m_k} \leq 2 \sum_{n=1}^\infty \tilde b_n,
\]
and this last sum is finite by \eqref{btilde1}. That is, \eqref{series3} holds for 
the adjusted sequence $(b_n)_n$. 
\end{proof}
\begin{proof}[Proof of Theorem~\ref{thm:rho1_rho2}]
Set  $u_{1,k} = \lfloor \rho_1^{-1}(k) \rfloor$ and
$u_{2,k} = \lfloor \rho_2^{-1}(k) \rfloor$ for each positive integer $k$. 
We construct a bounded sequence of weights $\w = (w_n)_{n=1}^\infty$, 
with $w_n \geq 1$ for each $n$, such that 
\beq\label{rho1rho2condition}
\sum_{j=1}^\infty \alpha_{u_{1,j}} =  \infty \mbox{ and }
	\sum_{k=1}^\infty \alpha_{u_{2,k}} < \infty.
\eeq
By Theorem~\ref{thm:Bw_criterion}, the corresponding weighted shift
$\Bw$ is $\rho_2$-frequently hypercyclic on $\ell^1(\N)$ 
but not $\rho_1$-frequently hypercyclic on $\ell^1(\N)$.

Most weights $w_n$ will equal 1, the only exception being when $n = u_{2,k}$ 
for some $k$ in which case the weights have  yet to be chosen. 
A priori, these weights must be at least 1.

Since $\rho_1$ grows much faster than $\rho_2$, the sequence 
$\{u_{2,k}\}_{k=1}^\infty$ will be much thinner than the sequence $\{u_{1,j}\}_{j=1}^\infty$.
For fixed (and large) $k$, we need a lower bound on the number of integers $u_{1,j}$ 
that lie between $u_{2,k}$ and $u_{2,k+1}$. If $j$ is an integer with 
\beq\label{star1}
\rho_2^{-1}(k) \leq \rho_1^{-1}(j) \leq \rho_2^{-1}(k+1).
\eeq
then $u_{2,k} \leq u_{1,j} \leq u_{2,k+1}$. Thus, it is sufficient to estimate  
how many integers $j$ satisfy \eqref{star1}. 
Since $\rho_1^{-1} = \rho_2^{-1}\circ \psi^{-1}$, and 
since  both $\rho_2$ and then $\psi$ are increasing, 
the inequalities \eqref{star1} are equivalent to 
$\psi(k) \leq j \leq \psi(k+1)$. 

Now, 
\begin{align*}
\psi(k+1) - \psi(k) 
	& = \frac{\psi(k)}{k}\bigg[(k+1) \left(\left.\frac{\psi(k+1)}{k+1}\right/ \frac{\psi(k)}{k}\right)  - k \bigg]\\
	& \geq \frac{\psi(k)}{k}  \big[ (k+1) - k \big]=\frac{\psi(k)}{k},
\end{align*}
where the assumption that $\psi(t)/t$ is increasing was used. 
In general, there are at least $\lfloor d \rfloor$ integers in an interval $[x,y]$ of length $d$. 
We therefore conclude that, for large $k$, 
\[
\#\big\{ j \colon u_{2,k} \leq u_{1,j} < u_{2,k+1} \big\} 
	\geq \Big\lfloor \frac{\psi(k)}{k} \Big\rfloor - 1\eqqcolon a_k. 
\]
This is illustrated in Figure \ref{fig1}.

\begin{figure}
	\begin{tikzpicture}[scale=1.0]
		
		% Axes
		\draw[->,thick] (-0.5,-0.2) -- (12.5,-0.2);
		%\draw[->,thick] (0,-1.0) -- (0,8.2);
		\draw[thick] (0,-1.0) -- (0,1.8);
		\draw[dashed, thick] (0,1.8) -- (0,3.1);
		\draw[thick] (0,3.1) -- (0,4.8);
		\draw[dashed,thick] (0,4.8) -- (0,7);
		\draw[->,thick] (0,7) -- (0,8.2);
		
		% Key x-coordinates
		\coordinate (uik)      at (1.1,-0.2);
		\coordinate (uij)      at (1.9,-0.2);
		\coordinate (uijp1)    at (2.4,-0.2);
		\coordinate (uijp2)    at (2.7,-0.2);
		\coordinate (u_end1)   at (9.0,-0.2);
		\coordinate (u_end2)   at (10.0,-0.2);
		
		% Horizontal reference levels
		\coordinate (k)        at (0,1.0);
		\coordinate (kp1)      at (0,1.6);
		\coordinate (j)        at (0,3.5);
		%\coordinate (jp1)      at (0,3.9);
		\coordinate (jp2)      at (0,4.4);
		\coordinate (top)      at (0,7.2);
		
		% Labels on y-axis
		\node[left] at (k)   {$k$};
		\node[left] at (kp1) {$k+1$};
		\node[left] at (j)   {$j$};
		%\node[left] at (jp1) {$j+1$};
		\node[left] at (jp2) {$j+1$};
		\node[left] at (top) {$j+a_k$};
		
		% Dashed horizontal guides
		\draw[dashed] (0,1.0) -- (1,1.0);
		\draw[dashed] (0,1.6) -- (10.0,1.6);
		\draw[dashed] (0,3.5) -- (1.5,3.5);
		%\draw[dashed] (0,3.9) -- (2.4,3.9);
		\draw[dashed] (0,4.4) -- (2.7,4.4);
		\draw[dashed] (0,7.2) -- (9.0,7.2);
		
		% rho_2
		\draw[thick]
		(1,1.0)
		.. controls (2.2,1.35) and (5.0,1.55) ..
		(10.0,1.6);
		
		\node[right] at (10.1,1.6) {$\rho_2(t)$};
		
		% rho_1
		\draw[thick]
		(1,3.1)
		.. controls (2.5,4.3) and (5.5,6.6) ..
		(9.0,7.2)
		.. controls (9.7,7.38) and (10.3,7.45) ..
		(10.8,7.5);
		
		\node[right] at (10.8,7.55) {$\rho_1(t)$};
		
		% Vertical dashed guides
		\draw[dashed] (1,-0.2) -- (1,1.0);
		\draw[dashed] (1.5,-0.2) -- (1.5,3.5);
		%\draw[dashed] (2.4,-0.2) -- (2.4,3.9);
		\draw[dashed] (2.7,-0.2) -- (2.7,4.4);
		
		\draw[dashed] (9.0,-0.2) -- (9.0,7.4);
		\draw[dashed] (10.0,-0.2) -- (10.0,1.6);
		
		% x-axis labels
		\node[below, left] at (1.2,-0.45) {\small$u_{2,k}$};
		\node[below] at (1.5,-0.2) {\small$u_{1,j}$};
		%\node[below] at (2.4,-0.2) {\small$u_{1,j+1}$};
		\node[below] at (2.7,-0.2) {\small$u_{1,j+1}$};
		
		\node[below] at (9.0,-0.2) {\small$u_{1,j+a_k}$};
		\node[below, right] at (9.8,-0.5) {\small$u_{2,k+1}$};
		
		% Annotation box
		\node[
		draw,
		minimum width=2.2cm,
		minimum height=1.2cm
		] at (6.0,4.0)
		{$a_k \gtrsim \dfrac{\psi(k)}{k}$};
	\end{tikzpicture}
	\caption{On the interval $[u_{2,k}, u_{2,k+1}]$  of the $x$-axis, on which 
		$\rho_2$ increases by 1 from $k$ to $k+1$, the faster growing function $\rho_1$ passes 
		through each of the integers from $j$ to $j+a_k$ for which the estimate 
		$a_k \gtrsim \psi(k)/k$ holds. Here $\psi=\rho_1\circ\rho_2^{-1}$.}
				\label{fig1}
\end{figure}

By assumption, the sequence $(a_k)_k$ is increasing and  unbounded,
so that Lemma~\ref{lem:series} applies. 
Thus, there is a sequence $(b_n)_{n=1}^\infty$ of positive real numbers 
that satisfies \eqref{series1}, \eqref{series2} and \eqref{series3} 
with respect to the sequence $(a_k)_k$; we take $b_0 = 1$ for convenience. 
We are now in a position to write down the weights $(w_n)_{n=1}^\infty$. 
We set 
\beq\label{weights}
w_n = \begin{cases}
b_{n-1}/b_n, & n =  u_{2,k} \mbox{ for some }k;\\
1, &\mbox{ otherwise}.
\end{cases}
\eeq 
By \eqref{series1}, we have $1 \leq w_n \leq 2$ for each $n$. 
In particular, $\Bw$ is bounded on $\ell^1(\N)$.
We need to verify \eqref{rho1rho2condition} with this choice of weights. 
For $n=u_{2,k}$ for some $k$, we have 
\[
\alpha_{u_{2,k}} = 1/\prod_{n=1}^{u_{2,k}} w_n 
	= 1/\prod_{i=1}^k w_{u_{2,i}}
	= 1/\prod_{i=1}^k \big( b_{i-1}/b_i \big) = b_k.
\]
As a consequence, $\sum_{k=1}^\infty \alpha_{u_{2,k}} < \infty$. 

Finally, we verify that $\sum_{j=1}^\infty \alpha_{u_{1,j}}$ is divergent. 
For fixed $k$ and for  $u_{2,k} \leq u_{1,j}  < u_{2,k+1}$,
\[
\alpha_{u_{1,j}} = 1/\prod_{n=1}^{u_{1,j}} w_n  = 1/\prod_{i=1}^k w_{u_{2,i}} = b_k.
\]
Hence, 
\[
\sum_{j: u_{2,k} \leq u_{1,j}  < u_{2,k+1}} \alpha_{u_{1,j}}  
	= b_k \times \big(\#\big\{ j \colon u_{2,k} \leq u_{1,j} < u_{2,k+1} \big\}\big) \geq b_ka_k,
\]
and so $\sum_{j=1}^\infty \alpha_{u_{1,j}} \geq \sum_{k=1}^\infty b_k a_k = \infty$.
\end{proof}

\section{Related formulations of the concept of  frequent hypercyclicity} 
\label{sec:literature}

 It is incumbent on us to clarify the relationship between the notion of 
$\rho$-frequent hypercyclicity as set out herein and similar, indeed overlapping, 
notions that are already to be found in the literature. 
Bonilla and Grosse-Erdmann~\cite{BGE07} observed that frequent hypercyclicity 
can be re-formulated as follows: 
$x$ is a frequently hypercyclic vector for $T$ if and only if 
the return set $\nTU$ of each nonempty open set $U$ in $X$ can be enumerated 
as an infinite strictly increasing sequence $(n_k)$ for which 
$n_k = \bigO(k)$ as $k\to\infty$: this also follows from \eqref{nkdensity2} with $\rho(t) = t$. 
Bayart and Matheron~\cite{BM09weak} adopt this viewpoint when they propose 
a generalisation of frequent hypercyclicity in the course of their study of how 
\lq frequently hypercyclic\rq\  an operator can be without being weakly mixing.

\begin{defn}
Let $(m_k)_{k\geq 1}$ be a strictly increasing sequence of natural numbers and let 
 $(n_k)_{k=1}^\infty$ be a strictly increasing sequence that is
\emph{$(m_k)$-bounded\/}, that is  $n_k = \bigO(m_k)$ as $k\to\infty$.
An operator $T$ is said to be \textsl{$(m_k)$-frequently hypercyclic\/}%
\footnote{Bayart and Matheron's own terminology is simply \lq $(m_k)$-hypercyclic\rq, 
but we prefer to add the adjective \lq frequently\rq.}  
%($(m_k)$-fhc)
if and only if there is an element $x$ in the space for which the return set $\nTU$ 
of each nonempty open set $U$ in $X$ can be enumerated as an infinite strictly increasing sequence $(n_k)$ that is $(m_k)$-bounded. 
\end{defn}

The above  definition is described in \cite{BM09weak} as 
\lq the most obvious generalisation of frequent hypercyclicity\rq, 
nonetheless $\rho$-frequent hypercyclicity could  also be considered a reasonable generalisation
of this concept. It is, perhaps, a matter of preference. 
If $(m_k)$ is any increasing sequence of natural numbers such that 
$\lim_{k\to \infty} m_k/k = \infty$, in \cite[Theorem~1.2]{BM09weak} they
construct an operator on $\ell^1(\N)$ that is $(m_k)$-frequently hypercyclic, 
but not weakly mixing.

Intuitively, in $\rho$-frequent hypercyclicity we count how often (at least about $\rho(N)$ times)
the orbit of $x$ up to time $N$ lies in a given open set $U$, 
whereas in $(m_k)$-frequent hypercyclicity
we count how long we need to wait (at most about $m_k$ iterates) until the orbit of 
$x$ has landed in the open set $k$ times. These are therefore dual notions. 
However, they are not equivalent - the enemy here is if either $\rho$ 
grows slowly or $m_k$ grows quickly. 
This distinction was also pointed out by Kosti\'c \cite{Kostic2019} 
(see Remark~1 and the example preceding it) who also gave a positive 
result similar to Proposition~\ref{prop:6.1}.  
The distinction is between the conditions $\rho(n_k) = O(k)$ and 
$n_k = O(m_k)$ with $m_k = \lfloor \rho^{-1}(k) \rfloor$. 

\begin{example}
Set $\rho(t) = \log_2 t$. 
We claim that there is no strictly increasing sequence of positive integers 
$(m_k)$ with the property that a strictly increasing sequence of positive integers $(n_k)$ 
has positive lower $\log_2$-density if and only if it is $(m_k)$-bounded. 

Suppose that $(m_k)$ was such a sequence. Let $j$ be an arbitrary fixed positive integer. 
The sequence $(n_k) = (2^{(j+1)k})$ has, by Lemma~\ref{lem:nkdensity},
positive lower $\log_2$-density since, in this case, $\rho(n_k) = (j+1)k =O(k)$. 
The sequence should therefore be $(m_k)$-bounded; there should be a finite 
constant $C_j$ such that $2^{(j+1)k} \leq C_j m_k$, each $k$. Then,
\[
m_k \geq 2^{jk} \mbox{ for all sufficiently large } k.
\]
Therefore, for each $j$, 
\[
\liminf_{k\to\infty} \frac{k}{\log_2 m_k} \leq \liminf_{k\to\infty} \frac{k}{\log_2(2^{jk})}
= 1/j.
\]
As a consequence of Lemma~\ref{lem:nkdensity}, $(m_k)$ has zero lower $\log_2$-density. 
But $(m_k)$ itself is  $(m_k)$-bounded and so it should have 
positive lower $\log_2$-density.   \hfill $\square$
\end{example}

\begin{example}As a dual example, set $(m_k) = (2^k)$. 
We claim that there is no calibration function $\rho$ for which positive 
lower $\rho$-density is equivalent to $(2^k)$-boundedness for every 
strictly increasing sequence $(n_k)$.

Suppose that $\rho$ was such a calibration function.
First choosing $(n_k) = (m_k) = (2^k)$, we would have that $(2^k)$ has positive lower 
$\rho$-density so that, by Lemma~\ref{lem:nkdensity}, 
$\rho(2^k) \leq Ck$ for some finite $C$ and for all large $k$. 
Now  take $(n_k) = (2^{2k})$. This sequence 
also has positive lower $\rho$-density by Lemma~\ref{lem:nkdensity}, since
\[
\liminf_{k\to\infty} \frac{k}{\rho(n_k)} = \liminf_{k\to\infty} \frac{k}{\rho(2^{2k})} 
\geq \liminf_{k\to\infty} \frac{k}{2Ck} = \frac{1}{2C}.
\]
However, the sequence $(2^{2k})$ is not $(2^k)$-bounded. \hfill $\square$  
\end{example}

In many cases, however, the properties of positive lower $\rho$-density and 
of being $(m_k)$-bounded are equivalent, that is they are 
satisfied by the same sequences $(n_k)$. 

\begin{prop}\label{prop:6.1}%
\rm{\textbf{(a)}} Suppose that $\rho$ is a calibration function that satisfies, 
for some finite $C_1$ and all sufficiently large $t$, the additional assumption
\beq
\rho^{-1}(2t) \leq C_1\, \rho^{-1}(t)\label{eq:rhodoubling1}
\eeq
Set $m_k \coloneqq \lfloor \rho^{-1}(k) \rfloor$. 
Then a strictly increasing sequence $(n_k)$ has positive 
lower $\rho$-density if and only if it is $(m_k)$-bounded. 
In particular, an operator $T$ is $\rho$-frequently hypercyclic if and only 
if it is $(m_k)$-frequently hypercyclic. 

\medskip

\noindent\rm{\textbf{(b)}} Suppose that $(m_k)$ is a strictly increasing sequence of natural numbers
that satisfies,  for some finite $C_2$ and all sufficiently large $k$,  
the additional conditions
\begin{align}
m_k & \leq \tfrac{1}{2}\big(m_{k-1} +m_{k+1}\big), \label{eq:mkdoubling1}\\
m_{2k} &\leq C_2\,m_k. \label{eq:mkdoubling2}
\end{align}
Define $\rho$ by $\rho(m_k) = k$ for each $k$, and $\rho$ 
piecewise linear between $m_k$ and $m_{k+1}$ for each $k$. 
Then $\rho$ is a calibration function and, moreover, 
a strictly increasing sequence $(n_k)$ is $(m_k)$-bounded 
if and only if it has positive lower $\rho$-density. 
In particular, an operator $T$ is $(m_k)$-frequently hypercyclic if and only 
if it is $\rho$-frequently hypercyclic. 
\end{prop}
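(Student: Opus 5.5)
Both parts reduce, through Lemma~\ref{lem:nkdensity}, to comparing the two growth conditions $\rho(n_k)=\bigO(k)$ and $n_k=\bigO(m_k)$. Once that comparison is settled for arbitrary strictly increasing sequences, the operator statements follow at once. Indeed, the return set $\nTU$, enumerated increasingly, has positive lower $\rho$-density if and only if it is $(m_k)$-bounded. So the same vector $x$ witnesses both notions.

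For \textbf{(a)}, suppose first that $(n_k)$ is $(m_k)$-bounded, say $n_k\le C m_k\le C\rho^{-1}(k)$. Choose $r$ with $2^r\ge C$. Iterating \eqref{eq:rhodoubling1} gives $\rho^{-1}(2^r k)\ge 2^r\rho^{-1}(k)$, because $\rho^{-1}(s)/s$ is non-decreasing; here I use the concavity direction, not \eqref{eq:rhodoubling1}. Hence $n_k\le \rho^{-1}(2^r k)$, so $\rho(n_k)\le 2^r k$, and Lemma~\ref{lem:nkdensity} gives positive lower $\rho$-density. This direction therefore needs no doubling hypothesis.

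Conversely, suppose $\rho(n_k)\le 2^r k$. Then $n_k\le \rho^{-1}(2^r k)\le C_1^r\rho^{-1}(k)$ by $r$ applications of \eqref{eq:rhodoubling1}. Since $\rho^{-1}(k)\le m_k+1\le 2m_k$, we get $n_k=\bigO(m_k)$.

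For \textbf{(b)}, I first check that $\rho$ is a calibration function. It is continuous, strictly increasing and unbounded since $(m_k)$ is strictly increasing. Its slopes are $1/(m_{k+1}-m_k)\le 1$, which gives the ${\rm Lip}(1,1)$ property. Condition \eqref{eq:mkdoubling1} says that the gaps $m_{k+1}-m_k$ are non-decreasing. So the slopes are non-increasing, and a piecewise linear function with non-increasing slopes is concave. Finally, concavity together with $\rho(0)\ge 0$ (adjusting $\rho$ near $0$, which is permitted) gives that $\rho(t)/t$ is non-increasing. All of this holds for large $t$, which suffices by the remark after Definition~\ref{defn:densityfunction}.

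For the equivalence in (b), I would reduce to part (a). Since $\rho^{-1}(k)=m_k$ exactly, we have $\lfloor\rho^{-1}(k)\rfloor=m_k$. It remains to verify \eqref{eq:rhodoubling1} for real $t$. For $k\le t\le k+1$, using monotonicity and \eqref{eq:mkdoubling2},
\[
\rho^{-1}(2t)\le m_{2k+2}\le C_2 m_{k+1}.
\]
Convexity of $k\mapsto m_k$ gives $m_{k+1}\le 2m_k+\bigO(1)$; more crudely, $m_{k+1}-m_k\le m_{2k}-m_{2k-1}\le m_{2k}/k$, so $m_{k+1}\le m_k+C_2m_k$. Hence $\rho^{-1}(2t)\le C_2(1+C_2)\,m_k\le C\rho^{-1}(t)$. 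Part (a) then applies.

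I expect the main obstacle to be the bookkeeping in (b): checking concavity near the start and the passage from integer to real arguments in the doubling condition. The comparisons in (a) are routine.
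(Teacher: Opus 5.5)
Part (a) is the paper's argument. Your inequality $\rho^{-1}(2^rk)\ge 2^r\rho^{-1}(k)$ comes from property (iv) of Definition~\ref{defn:densityfunction}, as you note, and is the paper's $\rho(2t)\le 2\rho(t)$. Your checks of continuity, the ${\rm Lip}(1,1)$ property and concavity in (b) also match the paper.

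The genuine problem is your derivation of $m_{k+1}\le (1+C_2)m_k$ when verifying \eqref{eq:rhodoubling1} in (b).
\begin{itemize}
\item Convexity of $k\mapsto m_k$ only gives \emph{lower} bounds on increments. It cannot by itself yield $m_{k+1}\le 2m_k+\bigO(1)$: the convex sequence $m_k=3^k$ has $m_{k+1}=3m_k$.
\item The link $m_{2k}-m_{2k-1}\le m_{2k}/k$ in your cruder chain is false. The sequence $m_k=k^3$ satisfies \eqref{eq:mkdoubling1} and \eqref{eq:mkdoubling2} with $C_2=8$, yet $m_{2k}-m_{2k-1}=12k^2-6k+1>8k^2=m_{2k}/k$ for $k\ge 2$. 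Non-decreasing increments bound the \emph{first} increment of a block, $k\,\Delta m_{k+1}\le m_{2k}-m_k$, not the last.
\end{itemize}
In fact no convexity is needed: $m_{k+1}\le m_{2k}\le C_2m_k$ by monotonicity and \eqref{eq:mkdoubling2}. So for $k\le t\le k+1$ your estimate becomes $\rho^{-1}(2t)\le m_{2k+2}\le C_2m_{k+1}\le C_2^2m_k\le C_2^2\rho^{-1}(t)$, and the reduction to (a) goes through.

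With this repair your route genuinely differs from the paper's. The paper uses convexity of $\rho^{-1}$ on $[2k,2k+2]$ to bound $\rho^{-1}(2k+2x)$ by the chord value $(1-x)m_{2k}+x\,m_{2k+2}$. It then applies \eqref{eq:mkdoubling2} at both endpoints, which gives the constant $C_2$. The monotonicity argument loses a factor $C_2$, but it shows that \eqref{eq:mkdoubling1} plays no role in this step.

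For property (iv), you use concavity plus $\rho(0)\ge 0$, whereas the paper checks $m_k\le k\,\Delta m_{k+1}$ directly. Your argument is fine provided $m_0=0$ and \eqref{eq:mkdoubling1} holds from $k=1$ on, which is also what the paper's proof tacitly uses. Your parenthetical claim that one may always adjust $\rho$ near $0$ is not correct, though. Take $m_k=k+100$: it satisfies \eqref{eq:mkdoubling1} for $k\ge 2$ and \eqref{eq:mkdoubling2}, but $\rho(t)=t-100$ for large $t$. Then $\rho(t)/t$ is eventually increasing, and no change near $0$ restores (iv).
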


\begin{proof}(a) Suppose first that $\rho$ is a calibration function that has 
property \eqref{eq:rhodoubling1} in addition to those in 
Definition~\ref{defn:densityfunction}.
Set $m_k = \lfloor \rho^{-1}(k) \rfloor$, and suppose that $(n_k)$  is a 
strictly increasing sequence with positive lower $\rho$-density. 
By Lemma~\ref{lem:nkdensity}, the latter implies that there is a constant $C$ 
such that $\rho(n_k) \leq Ck$ for each $k$. 
We may take $C$ to be $2^j$ for some natural number $j$.
Then, for each $k$, 
\[
n_k \leq \rho^{-1}(2^j k) \leq  C_1^j\rho^{-1}(k) \leq C_1^j  (m_k +1) \leq 2 C_1^j\,m_k,
\]
which implies  that the sequence $(n_k)$ is $(m_k)$-bounded. 
The second inequality comes from \eqref{eq:rhodoubling1}.

Conversely, suppose that $(n_k)$ is a strictly increasing $(m_k)$-bounded sequence. 
Then, there is a constant $C$ such that $n_k \leq Cm_k \leq C \rho^{-1}(k)$ for each 
$k$. Take $C = 2^j$ for some natural number $j$ and note that, 
by Property (iv) of Definition~\ref{defn:densityfunction}, $\rho(2t) \leq 2\rho(t)$. 
Then, 
\[
\rho(n_k) \leq \rho\big( 2^j \rho^{-1}(k) \big) \leq 2^j \rho\big( \rho^{-1}(k) \big) = 2^jk,
\]
and so $(n_k)$ has positive lower $\rho$-density, again by Lemma~\ref{lem:nkdensity}.

\medskip

(b) Suppose that $(m_k)$ is 
a strictly increasing sequence of natural numbers that satisfies 
\eqref{eq:mkdoubling1} and \eqref{eq:mkdoubling2}. 
With $\rho$ defined by $\rho(m_k) = k$ for each $k$, and $\rho$ 
piecewise linear between $m_k$ and $m_{k+1}$, we find that   
$\rho$ satisfies the requirements of Definition~\ref{defn:densityfunction}, 
and so is a calibration function. 
Certainly, $\rho$ is continuous, strictly increasing and unbounded since 
the sequence $(m_k)$ is increasing. 
Set $\Delta m_k = m_k - m_{k-1}$. 
The slope of the line segment on the graph of $\rho$ between 
$(m_{k-1},k-1)$ and $(m_k, k)$ is $1/\Delta m_k$. 
Since $\Delta m_k \geq 1$ for each $k$, this slope is at most 1 and  
so $\rho$ is Lip$(1,1)$. 
Since, by \eqref{eq:mkdoubling1}, $\Delta m_k$ is non-decreasing in $k$, 
the concavity of $\rho$ follows. 

To complete the verification that $\rho$ has the properties of a calibration
function listed in Definition~\ref{defn:densityfunction}, 
we show that $\rho(t)/t$ is a non-increasing function of $t$. 
Note that $\rho(m_k)/m_k = k/m_k$ and that, with $m_0=0$, 
\[
m_k = \sum_{j=1}^k \Delta m_j \leq k\, \Delta m_k \leq k\,\Delta m_{k+1} 
	= k ( m_{k+1} - m_k ).
\]
Here, we again used the assumption \eqref{eq:mkdoubling1} 
that $\Delta m_k$ is non-decreasing in $k$.
It follows that $k/m_k$ is non-increasing in $k$. For $m_k \leq t \leq m_{k+1}$, 
the expression $\rho(t)/t$ decreases from $k/m_k$ to $(k+1)/m_{k+1}$. 
Hence, $\rho(t)/t$ is non-increasing in $t$ and so $\rho$ is a bona fide calibration
function. 

The calibration function $\rho$ also satisfies \eqref{eq:rhodoubling1}. 
In fact, suppose that $t = k+x$ with $k \in \N$ and $0 \leq x\leq 1$. 
Since $\rho^{-1}$ is linear between $k$ and $k+1$, 
\[
\rho^{-1}(t) = \rho^{-1}(k) + x\big[ \rho^{-1}(k+1) - \rho^{-1}(k) \big]
	= (1-x)\,m_k + x\,m_{k+1}.
\]
Since $\Delta m_{2k+1} \leq \Delta m_{2k+2}$, the function $\rho^{-1}$ is convex on $[2k,2k+2]$, 
and so 
\begin{align*}
\rho^{-1}(2t) & = \rho^{-1}(2k+2x)\\
&\leq \rho^{-1}(2k) + 2x \bigg( \frac{\rho^{-1}(2k+2)-\rho^{-1}(2k)}{2} \bigg)\\
& = (1-x)m_{2k} + x \, m_{2k+2}\\
& \leq C_2 \big[ (1-x) m_k + x \,m_{k+1} \big] = C_2\, \rho^{-1}(t). 
\end{align*}
The assumption \eqref{eq:mkdoubling2} was used at the last step. 

At this point, we are in the setting of Part (a) of this proposition and 
may conclude from Part (a) that an increasing of natural numbers $(n_k)$ 
is $(m_k)$-bounded if and only if it has positive lower $\rho$-density. 
\end{proof}

The study of  $(m_k)$-frequent hypercyclicity is progressed further in the work of 
Kosti\'c \cite{Kostic2019} and of Heo, Kim and Kim \cite{HeoKimKim2017}. 
In particular, both papers consider the notion of \lq$q$-frequent hypercyclicity\rq\ 
first introduced by Gupta and Mundayadan \cite{GupMun1} (see also \cite{GupMun2}).
This notion equates to $(m_k)$-frequent hypercyclicity for the sequence 
$m_k = k^q$ where $q$ is a positive integer 
(or $q$ is positive and real as in \cite{Kostic2019}). 
Equivalently, an operator $T$ on a Fr\'echet space $X$ is said to be 
\textsl{$q$-frequently hypercyclic\/} ($q\geq 1$) if there is an 
element $x$ in $X$ such that, for each nonempty open set $U$ in $X$,
\beq\label{q-fhc}
\liminf_{N\to\infty} \frac{\#\{n \in \nTU \,\colon\,n \leq N^q\}}{N} >0.
\eeq
Such an $x$ is called a \textsl{$q$-frequently hypercyclic vector\/}  for $T$. 
Proposition~\ref{prop:6.1} applies and shows that the notion is equivalent to 
$\rho$-frequent hypercyclicity with $\rho(t) = t^{1/q}$.  
Gupta and Mundayadan prove a $q$-frequent hypercyclicity criterion 
\cite[Theorem~3.7]{GupMun1} and show, by an argument specific
to this case,  that if $p> q$, $p$, $q$ in $\N$, 
then there is a weighted backward shift $\Bw$ on $\ell^1(\N)$ that is 
$p$-frequently hypercyclic but not $q$-frequently hypercyclic. 
This is the special case of our Theorem~\ref{thm:rho1_rho2} with 
$\rho_1(t) = t^{1/q}$, $\rho_2(t) = t^{1/p}$ and $\psi(t) = t^{p/q}$. 
They also prove that, as in the case $q = 1$ of frequent hypercyclicity, 
rotations and powers of $q$-frequently hypercyclic operators remain 
$q$-frequently hypercyclic. 

There are a number of criteria for  $(m_k)$-frequent hypercyclicity 
in the literature, albeit expressed in slightly different forms. 
The first such criterion, in the case of $q$-frequent 
hypercyclicity, is due to Gupta and Mundayadan 
\cite{GupMun1, GupMun2}, see also Heo et al.\ \cite{HeoKimKim2017}.
Kim  \cite{Kim} provides an $(m_k)$-frequent hypercyclicity criterion. 
In the setting of various more general densities and more general
sequences $(m_k)$, Kosti\'c 
derives an $(m_k)$-frequent hypercyclicity criterion -- see 
Theorem~3.1 in \cite{Kostic2019} and the preceding discussion.

\begin{example} The following is an example of 
a weighted backward shift $\Bw$ on $\ell^1(\N)$ 
that is $\log_2$-frequently hypercyclic  but not $q$-frequently hypercyclic
for any $q\geq 1$. A straightforward modification will produce 
an example on $\ell^p(\N)$, $1 \leq p < \infty$.

\smallskip\noindent Set $w_0 = w_1 = w_2 = 1$ and 
\[
w_n = \left( \frac{\log_2 n}{\log_2 (n-1)} \right)^2, \quad n \geq 3. 
\]
All weights satisfy $w_n \geq1$ and so Theorem~\ref{thm:Bw_criterion} applies.  
Here, 
\[
\alpha_n = \frac{1}{\big( \log_2 n \big)^2}, \quad n \geq 3.
\]
Then, $\sum_{n=3}^\infty \alpha_{2^n} = \sum_{n=3}^\infty 1/n^2$ is convergent 
and so $\Bw$ is $\log_2$-frequently hypercyclic. 
On the other hand, for each $q\geq 1$, 
\[
\sum_{n=3}^\infty \alpha_{n^q}
	= \sum_{n=3}^\infty \frac{1}{ \big( \log_2 n^q \big)^2}
	= \sum_{n=3}^\infty \frac{1}{q^2 \big( \log_2 n \big)^2},
\]
which is a divergent series. 
% (since, for instance, we have $\log_2n \leq \sqrt{n}$ for $n$ large).
That is, $\sum_{n=3}^\infty \alpha_{n^q}$ is divergent and so, 
by Theorem~\ref{thm:Bw_criterion}, $\Bw$ is not $t^{1/q}$-frequently hypercyclic. 
\qed
\end{example}

\smallskip

We also mention here  another useful perspective for studying the `frequency' of frequently hypercyclic operators, which was introduced by Ernst and Mouze~\cite{EM19, EM21}. Their approach is to replace the  lower density in \eqref{density} with weighted (lower) densities.
To this end, they consider a  sequence $(a_k)_{k \geq 1}$ of nonnegative numbers, with $\sum_{k=1}^n a_k \to \infty$ as $n\to \infty$. This determines the associated \emph{admissible matrix} $A = (a_{n,k})$, which is given by the coefficients 
\begin{align*}
	a_{n,k} = 
	\begin{cases}
		\displaystyle 
		\frac{a_k}{\sum_{j=1}^{n} a_j}, & 1 \leq j \leq n, \\
		0, & k >n.
	\end{cases}
\end{align*}
The \emph{lower $A$-density} of the subset $E \subset \N$, with respect to the admissible matrix $A = (a_{n,k})$, is then defined as
\begin{equation*}
	\ldens_A(E) = \liminf_{n\to\infty} \sum_{k=1}^{\infty} a_{n,k} \1_E(k).
\end{equation*}

So for an admissible matrix $A = (a_{n,k})$ and a separable infinite dimensional Fréchet space $X$, the operator $T \colon X \to X$ is said to be $A$\emph{-frequently hypercyclic} if there exists $x \in X$ such that for any nonempty open set $U \subset X$, the return set $\nTU$ has positive lower $A$-density.

The original definition of frequent hypercyclicity can be recovered by taking $a_k = 1$ for each $k \in \N$ in the sequence $(a_k)$. This gives the well-known Ces\`{a}ro matrix that corresponds to the `natural' lower density in \eqref{density}.
Among the other densities considered in \cite{EM19, EM21} is the density determined by the sequence $(a_k)_k = (1/k)_{k \in \N}$.  This gives the so-called \emph{lower logarithmic density} of $E \subset \N$, which is defined as
\begin{equation} \label{logFHC}
	\ldens_A(E) \coloneqq \liminf_{n\to\infty} \frac{1}{\log n} \sum_{\substack{k \in E, \\ k \leq n}} \frac{1}{k}.
\end{equation}

Using this framework, the authors in \cite{EM19, EM21} utilise various scales of lower $A$-densities to perform a quantitative study of the Frequent Hypercyclicity Criterion. They also identify interesting examples. These including operators that are frequently hypercyclic but not $A$-frequently hypercyclic on a particular scale, and an operator that is \emph{logarithmically frequently hypercyclic}, as per \eqref{logFHC}, but not frequently hypercyclic.
Theorem \ref{thm:rho1_rho2} can be viewed an analogue of these results.

\smallskip 

Finally, we mention that the collection of subsets of the natural numbers $\N$ 
that have positive lower $\rho$-density forms a proper 
Furstenberg family. That $T$ is $\rho$-frequently hypercyclic 
is then a special case of $\mathcal{A}$-hypercyclicity as defined by 
B\`es et al.\ \cite{BesMenetPerisPuig2016} or of 
$\F$-hypercyclicity as defined by Kosti\'c \cite{Kostic2019}. 

%
%% amsplain
%\bibliographystyle{abbrv}
%
%\bibliography{fhcbib}

\end{document}